\documentclass[reqno]{amsart}
\usepackage{cases}
\usepackage{latexsym}
\usepackage{amsmath}
\usepackage[arrow,matrix]{xy}
\usepackage{stmaryrd}
\usepackage{amsfonts}
\usepackage{amsmath,amssymb,amscd,bbm,amsthm,mathrsfs,dsfont}

\usepackage{fancyhdr}
\usepackage{amsxtra,ifthen}
\usepackage{verbatim}

\numberwithin{equation}{section}

\theoremstyle{plain}
\newtheorem{theorem}{Theorem}[section]
\newtheorem{lemma}[theorem]{Lemma}
\newtheorem{proposition}[theorem]{Proposition}
\newtheorem{corollary}[theorem]{Corollary}

\theoremstyle{definition}
\newtheorem{definition}[theorem]{Definition}

\newtheorem{remark}[theorem]{Remark}

\newcommand{\I}{\mathcal I}
\newcommand{\one}{\mathbf 1}
\newcommand{\ad}{\operatorname{ad}}
\newcommand{\Span}{\operatorname{span}}

\begin{document}

\title[FIs on incidence algebras]
{Functional identities of degree 2 at two-sided zero products on incidence algebras}

\author{Hongyu Jia and Zhankui Xiao}

\address{Jia: School of Mathematical Sciences, Huaqiao University,
Quanzhou, Fujian, 362021, P. R. China}
\email{jiahy1995@163.com}

\address{Xiao: School of Mathematical Sciences, Huaqiao University,
Quanzhou, Fujian, 362021, P. R. China}
\email{zhkxiao@hqu.edu.cn}

\begin{abstract}
Let $R$ be a commutative ring with unity such that $\frac{1}{2}\in R$.
Let $X$ be a connected finite poset with $|X|>2$ and $I(X,R)$ be the incidence
algebra of $X$ over $R$. In this paper, we characterize the forms of linear maps
$F_1,F_2,F_3,F_4:I(X,R)\to I(X,R)$ satisfying
\[
F_1(f)g+fF_2(g)+F_3(g)f+gF_4(f)=0,
\]
whenever $fg=gf=0$. We prove that the $F_i$'s are of the so-called standard form
if and only if any two edges in the comparability graph of $X$ are contained in one cycle.
The ingredients of the proof contain a characterization of $2$-connectedness in comparability graph
and the two-sided zero product determined property of incidence
algebras.
\end{abstract}

\subjclass[2010]{Primary 16R60; Secondary 16S50, 05C50, 47L35}

\keywords{functional identity, incidence algebra, two-sided zero
product determined algebra, comparability graph, derivation}

\thanks{The first author is partially supported by the National Natural Science Foundation of China (No. 12401022).}

\maketitle

\section{Introduction}\label{sec:introduction}

Let $m,n$ be two non-negative integers and
$f(x_1, \ldots, x_m, y_1, \ldots, y_n)$ be
a non-commutative polynomial in the variables $x_1, \ldots, x_m, y_1, \ldots, y_n$
with integral coefficients. Given a ring $R$ and a nonempty subset $S$ of $R$,
we say that $f$ is a {\em functional identity} (FI for short) on $S$ if
$$
f(s_1, \ldots, s_m, F_1(s_1, \ldots, s_m), \ldots, F_n(s_1, \ldots, s_m))=0
$$
for all $s_1, \ldots, s_m\in S$, where $F_i: S^m\rightarrow R$ are maps (called {\em functions}) for $1\leq i\leq n$.
In this case the functions $F_1, \ldots, F_n$ are considered as unknowns, and
the main objective of FI theory is to describe the form of the functions $F_1, \ldots, F_n$.
From this point of view, the FI theory can be roughly said to be a theory of solving equations
on (usually non-commutative) rings.

Over thirty years ago, Bre\v{s}ar studied some basic FIs related to additive commuting maps
and commuting traces of bi-additive maps and their applications in Lie theory \cite{Bre93-1,Bre93-2}.
In fact, the main motivation of developing the FI theory was searching for tools for the
Herstein's Lie-type mapping conjectures (see \cite{Her}). In the process of establishing the FI theory,
the FIs of degree 2 are of special importance. Let $R$ be an associative ring
and $F_1, F_2, F_3, F_4: R\rightarrow R$ be maps such that
\begin{equation}\label{eq:introduction-FI}
F_1(x)y+xF_2(y)+F_3(y)x+yF_4(x)=0
\end{equation}
for all $x,y\in R$. This is the FI of degree 2 studied in \cite{Bre95-2}.
It is clear that an additive commuting map satisfies the identity \eqref{eq:introduction-FI}.
We would like to remark that the general FIs of degree 2 on
prime rings are studied in \cite{Bre95-1} and this is the paper where the phrase ``functional identity" was introduced.
In 1998 Chebotar \cite{Che98} extended the results of \cite{Bre95-1} to FIs of arbitrary degree. In particular, he initiated
the direction in which the FI theory was later created. Motivated by \cite{Che98}, Beidar provided a systematic
approach in his fundamental paper \cite{Beidar98}, which in our opinion can be viewed as the first framework for the general FI theory.
A remarkable application of the FI theory is the complete solutions of Herstein's conjectures on
Lie homomorphisms and Lie derivations in associative rings, see \cite{BBCM1,BBCM2,BBCM3}.
We refer the reader to the monograph \cite{BCM07} and the recent survey \cite{Bre23} for a comprehensive
understanding of the FI theory.

Notice that the current FI theory depends on a core concept of $d$-free sets, but
there are also many results on FIs that are not superseded by the general theory (see \cite{BCM07}).
Triangular algebras are typical examples that are not $d$-free, on which commuting maps
and their applications to Lie automorphisms can nevertheless be described \cite{BE04,Cheung}.
Subsequently, the FI (\ref{eq:introduction-FI}) was studied by Eremita \cite{Eremita13}
and he described the form of additive maps $F_i$'s on triangular rings satisfying certain conditions.
Then the results in \cite{Eremita13} was generalized by Wang in \cite{Wang15}.
More recently, Arga\c{c} and Ghahramani \cite{ArgacGhahramani25} studied a local version of the FI
\eqref{eq:introduction-FI} under the condition $xy=yx=0$. They
obtained its standard solutions on certain triangular algebras,
with applications to upper triangular matrix algebras, finite nest
algebras, and block upper triangular matrix algebras.
The above results (and more results can be found in \cite{BCM07,Bre23}) suggest that
there may be an FI theory in a broader sense, and this is the initial motivation of our present work.

The specific objective of this paper is to study the FI \eqref{eq:introduction-FI} with condition $xy=yx=0$
on incidence algebras, another natural generalization of upper triangular matrix algebras.
About ten years ago, the second author \cite{Xiao}, Zhang and Khrypchenko \cite{ZhangKh}
started to study the Herstein's Lie-type mapping research program on incidence algebras in a linear and combinatorial manner.
In \cite{jia20} the authors proved that every commuting map on the incidence algebra $I(X,R)$ is proper
if and only if any two edges in the comparability graph $\Gamma(X)$ (see Section 2) are contained in one cycle.
On the other hand, the structure of Lie automorphisms was studied by Fornaroli, Khrypchenko and Santulo \cite{FKhS22}.
For more results about Lie-type maps of incidence algebras, we refer the reader
to \cite{FKhS24,KaKh21,WangXiao} and the references therein.
Therefore, it is natural to study the general FIs and their applications in Lie theory on incidence algebras.

An incidental purpose of this paper is to determine the two-sided zero
product property of incidence algebras, since the local condition $xy=yx=0$ is naturally connected with the
notion of a two-sided zero product determined algebra.
This notion was introduced and systematically investigated over fields by
Bajuk and Bre\v{s}ar in \cite{BajukBresar22}. When $X$ is a finite partially ordered set (poset for short),
we shall show that the incidence algebra $I(X,R)$ is two-sided zero product determined and
hence is zero Lie product determined.

The paper is organized as follows. Section~2, we introduce the
comparability graph of a finite poset, incidence algebras,
and some required facts of derivations. Especially, in a comparability graph several equivalent descriptions
of the fact that any two edges are contained in one cycle are shown for later use.
Section~3 is devoted to proving the two-sided zero product determined property of incidence algebras.
Section~4, we study the FI \eqref{eq:introduction-FI} on incidence algebras by characterizing
the symmetric and anti-symmetric versions of it separately. Then a sufficient and necessary condition
on comparability graphs is given for the fact the FI \eqref{eq:introduction-FI} has the so-called standard solutions.

\section{Preliminaries}\label{sec:preliminaries}

In this section, we introduce some results (some of which are new) about
comparability graphs, incidence algebras and derivations.
{\em From now on, we always denote $R$ a commutative ring with unity}.

\subsection{Comparability graph}

We first recall some basic graph-theoretic terminologies. Let $G$ be
a finite simple undirected graph with vertex set $V(G)$ and edge
set $E(G)$. A \emph{path} from a vertex $u$ to a vertex $v$ in $G$
is a sequence of distinct vertices
\[
u=x_0,x_1,\ldots,x_m=v
\]
such that $\{x_{i-1},x_i\}\in E(G)$ for every $1\leq i\leq m$.
The graph $G$ is called \emph{connected} if every two distinct
vertices of $G$ can be joined by a path. For a subset $S\subseteq V(G)$, the notation $G\setminus S$
denotes the graph obtained from $G$ by deleting all vertices in
$S$ and all edges incident with them.

Let $(X,\leq)$ be a finite poset. For $i\leq j\in X$ with $i\neq j$, we write $i<j$ or $j>i$ for short.
For distinct $x,y\in X$, denote $x\sim y$ if $x<y$ or $y<x$.
The \emph{comparability graph} $\Gamma(X)$ associated to $X$ (see \cite[Chapter 5]{Gol})
is the simple undirected graph with vertex set $X$ and
edge set
\[
E(\Gamma(X))
 =\bigl\{\{x,y\} \mid x,y\in X,\ x\sim y\bigr\}.
\]
The poset $X$ is called \emph{connected} if its comparability graph
$\Gamma(X)$ is connected.

We would like to remind the reader to distinguish the well-known Hasse diagram (see \cite[\S 1.1]{SpDo})
and the comparability graph associated to a poset $X$. For example, let $X=\{1,2,3,4\}$
with partial order $1<2$, $2<3$ and $2<4$. Then the associated Hasse diagram is the Dynkin diagram of
type $D_4$, but the edge set of comparability graph $\Gamma(X)$ is
$\{\{1,2\}, \{2,3\}, \{1,3\}, \{2,4\}, \{1,4\}\}$.

\begin{definition}\label{def:cut-vertex}
Let $X$ be a connected finite poset. A vertex $z\in X$ is called a
\emph{cut vertex} if $\Gamma(X)\setminus\{z\}$ is disconnected.
\end{definition}

For example, let $X=\{x,y,z\}$ with $x<y<z$. By transitivity
$x<z$. Therefore, after deleting $y$, the vertices $x$ and $z$
remain joined by the edge $\{x,z\}$. Hence
$\Gamma(X)\setminus\{y\}$ is still connected, and consequently $y$ is
not a cut vertex.

\begin{definition}\label{def:cycle}
Let $x_1,\ldots,x_n$ be distinct vertices of $X$, where $n\geq3$.
We say that $\{x_1,\ldots,x_n\}$ forms a \emph{cycle} in
$\Gamma(X)$ if
\[
x_1\sim x_2,\quad x_2\sim x_3,\quad\ldots,\quad
x_{n-1}\sim x_n,\quad x_n\sim x_1.
\]
\end{definition}

If $\{x_1,x_2,\ldots,x_n\}$ forms a cycle, we say that the cycle contains edges
$\{x_{i-1},x_i\}$, $1\leq i\leq n$, where the subscripts are modulo $n$.
For any two edges $\{x,y\}$ and $\{u,v\}$ in $E(\Gamma(X))$, define
$
\{x,y\}\approx\{u,v\}
$
if and only if there exists a cycle in $\Gamma(X)$ containing both
$\{x,y\}$ and $\{u,v\}$. We also set $\{x,y\}\approx\{x,y\}$. Then
the binary relation $\approx$ is in fact an equivalence relation on
$E(\Gamma(X))$ (see \cite[Lemma 2.2]{Yang19}).

\begin{definition}\label{def:k-connected}
Let $G$ be a finite graph and let $k\geq1$. The graph $G$ is called
\emph{$k$-connected} if $|V(G)|>k$ and $G\setminus S$ is connected
for every subset $S\subseteq V(G)$ satisfying $|S|<k$.

A finite poset $X$ is called \emph{$k$-connected} if the associated
comparability graph $\Gamma(X)$ is $k$-connected.
\end{definition}

The following lemma describes the intrinsic relationship among the aforementioned concepts,
which is helpful for our investigation of FIs on incidence algebras.

\begin{lemma}\label{lem:graph-equivalences}
Let $X$ be a finite connected poset with $|X|>2$. The following statements are
equivalent:
\begin{enumerate}
\item[(a)] Any two edges in $E(\Gamma(X))$ are contained in one cycle.
\item[(b)] $X$ is $2$-connected.
\item[(c)] $\Gamma(X)\setminus\{z\}$ is connected for every $z\in X$.
\item[(d)] $\Gamma(X)$ has no cut vertex.
\end{enumerate}
\end{lemma}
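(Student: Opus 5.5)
The equivalences (b)$\Leftrightarrow$(c)$\Leftrightarrow$(d) are essentially definitional, so the real content is (a)$\Leftrightarrow$(c). I would prove the chain (b)$\Rightarrow$(c)$\Rightarrow$(d)$\Rightarrow$(b), then add (a)$\Rightarrow$(c) and (c)$\Rightarrow$(a).

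For the definitional part: by Definition~\ref{def:k-connected}, $X$ is $2$-connected exactly when $|X|>2$ and $\Gamma(X)\setminus S$ is connected for all $S$ with $|S|\le 1$. Since $X$ is connected and $|X|>2$ is assumed, this is the same as (c). Statement (d) is (c) rephrased through Definition~\ref{def:cut-vertex}.

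For (a)$\Rightarrow$(c), suppose some $z$ is a cut vertex. Pick vertices $u,v$ in different components of $\Gamma(X)\setminus\{z\}$. Because $\Gamma(X)$ is connected, each component of $\Gamma(X)\setminus\{z\}$ contains a neighbour of $z$. So we may take $u,v$ adjacent to $z$, giving edges $\{z,u\}$ and $\{z,v\}$. A cycle containing both edges passes through $z$ exactly once, and removing $z$ from it leaves a path from $u$ to $v$ avoiding $z$. This is a contradiction.

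For (c)$\Rightarrow$(a), I would use the standard graph-theoretic route and argue in $\Gamma(X)$. First, any two distinct vertices $x,y$ lie on a common cycle (Whitney's theorem for $2$-connected graphs). I would prove this directly by induction on the distance $d(x,y)$.
\begin{itemize}
\item For $d=1$, remove the edge $\{x,y\}$. The graph stays connected, since otherwise one of $x,y$ would be a cut vertex (here $|X|>2$ is used), so some other path joins $x$ and $y$ and closes a cycle.
\item For the inductive step, take a shortest path $x,\dots,w,y$. By induction $x$ and $w$ lie on a cycle $C$. Since $\Gamma(X)\setminus\{w\}$ is connected, there is a path $P$ from $y$ to $x$ avoiding $w$. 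Let $p$ be the first vertex of $P$ lying on $C$. Then splice: $y\to w$, follow $C$ from $w$ around to $p$ along the arc containing $x$, and return along $P$ back to $y$.
\end{itemize}

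Second, pass from vertices to edges via subdivision. Given edges $e_1,e_2$, subdivide each one by a new vertex. The subdivided graph is still $2$-connected: deleting an old vertex leaves a subdivision of a connected graph, and deleting a new vertex is at worst deleting an edge, which preserves connectivity by the first bullet's argument. Applying the vertex version to the two new vertices yields a cycle through both; contracting back gives a cycle in $\Gamma(X)$ containing $e_1$ and $e_2$. If $e_1=e_2$, the case $d=1$ already gives a cycle.

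Since the definition of cycle here only requires distinct vertices with consecutive ones comparable, graph cycles of length at least $3$ in $\Gamma(X)$ are exactly such cycles. Alternatively, one could cite \cite[Lemma 2.2]{Yang19}, or the classical Whitney theorem, to shorten this step.

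The main obstacle is the splicing step in the induction, specifically choosing the arc of $C$ so that the result is a genuine simple cycle containing $x$ and $y$. This is handled by taking $p$ to be the first vertex of $P$ on $C$ and, if $p=x$, using either arc.
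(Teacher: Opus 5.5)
Your proposal is correct, and its architecture matches the paper's. Both proofs treat (b), (c), (d) as definitional. Both prove (a)$\Rightarrow$(c) by the same contradiction, using two neighbours of a cut vertex lying in different components. Both reduce the edge statement to a vertex statement by subdividing $e_1,e_2$ and checking that the subdivided graph is still $2$-connected.

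The real difference is the key step. The paper applies the global form of Menger's theorem to $w_1,w_2$ to get two independent paths. You instead prove the needed special case directly: Whitney's theorem that two vertices of a $2$-connected graph lie on a common cycle, by induction on distance, splicing at the first vertex $p$ of $P$ on $C$. Your route makes the lemma self-contained and elementary at the cost of a little more work; the paper's is shorter but rests on a cited theorem.

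Your base case also differs. The paper puts an edge $\{a,b\}$ on a cycle by taking a second neighbour $c$ of $a$ and a $b$--$c$ path in $\Gamma(X)\setminus\{a\}$. You argue that an edge cannot be a bridge. That bridge argument has a side benefit: it explicitly justifies that deleting a subdivision vertex $w_i$ keeps the graph connected. The paper attributes this simply to connectedness of $\Gamma(X)$, whereas it really needs $e_i$ not to be a bridge.

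A few points are worth tightening, though none is a gap.
\begin{enumerate}
\item When the deleted old vertex $z$ is an endpoint of $e_i$, the remainder is $\Gamma(X)\setminus\{z\}$ with $w_i$ attached as a pendant vertex. It is not literally a subdivision of $\Gamma(X)\setminus\{z\}$, though it is still connected.
\item When contracting back, note that $w_i$ has degree $2$. The cycle through $w_i$ must therefore use both $\{a_i,w_i\}$ and $\{w_i,b_i\}$, and since $e_1\neq e_2$ the contracted cycle has at least $3$ vertices.
\item In the splice, if $y$ already lies on $C$ then $p=y$, and your construction still returns a valid cycle; alternatively $C$ itself already contains both vertices.
\end{enumerate}
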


\begin{proof}
Clearly it follows from the definitions that the statements (b), (c) and (d) are equivalent.
We now prove the equivalence of (a) and (b).

Assume first that (a) holds. We aim to prove that
$\Gamma(X)\setminus\{z\}$ is connected for every $z\in X$.
Suppose that, to the contrary, there exists a vertex $z\in X$ such that
$\Gamma(X)\setminus\{z\}$ is disconnected. Let $Y_1$ and $Y_2$ be two
distinct connected components of $\Gamma(X)\setminus\{z\}$. Since $\Gamma(X)$ is
connected, each $Y_i$ contains a vertex $u_i$ adjacent to $z$.
Indeed, a path in $\Gamma(X)$ from a vertex of $Y_i$ to $z$ must enter $z$
from a vertex of $Y_i$. Thus $\{z,u_i\}\in E(\Gamma(X))$ for $i=1,2$.
By condition (a), the two edges $\{z,u_1\}$ and
$\{z,u_2\}$ are contained in a cycle $C$. Since a vertex on a
cycle is incident with exactly two edges of that cycle,
deleting $z$ from $C$ leaves a path from $u_1$ to $u_2$ in
$\Gamma(X)\setminus\{z\}$. This contradicts the fact that $u_1$ and $u_2$
belong to distinct connected components.
Therefore (c) and hence (b) holds.

Assume now that (b) holds. We first claim that every edge of $\Gamma(X)$ is contained
in a cycle. In fact, for an arbitrary edge $\varepsilon=\{a,b\}\in E(\Gamma(X))$, the vertex
$a$ must have a neighbor $c\neq b$. Otherwise, $b$ would be the
only neighbor of $a$, so deleting $b$ would isolate $a$. Since
$|X|\geq 3$, at least one further vertex would remain, and
$\Gamma(X)\setminus\{b\}$ would be disconnected. This contradicts the
$2$-connectedness of $\Gamma(X)$.
Since $\Gamma(X)\setminus\{a\}$ is connected, it contains a path
$P$ from $b$ to $c$. Notice that the path $P$ does not contain $a$. Therefore
the union of $P$ with the two edges $\{a,b\}$ and $\{a,c\}$ forms a
cycle containing $\varepsilon$.

Now let $\varepsilon_1$ and $\varepsilon_2$ be two distinct edges
of $\Gamma(X)$, where $\varepsilon_i=\{a_i,b_i\}$. Next we prove that
$\varepsilon_1$ and $\varepsilon_2$ are contained in one cycle.
Introduce a new vertex $w_i$ into each edge $\varepsilon_i$ and replace $\varepsilon_i$
with the two edges $\{a_i,w_i\}$ and $\{w_i,b_i\}$.
This operation is called the subdivision of $\varepsilon_i$.
Denote the resulting graph by $\widetilde \Gamma(X)$.

We claim that the new graph $\widetilde \Gamma(X)$ is $2$-connected. Since $\Gamma(X)$ is
$2$-connected, by (c), $\Gamma(X)\setminus\{z\}$ is connected for every $z\in X$.
This means that $\widetilde \Gamma(X)\setminus\{z\}$ is connected for every $z\in X\setminus\{a_1,w_1,b_1,a_2,w_2,b_2\}$.
If $z=w_i$, then $\widetilde \Gamma(X)\setminus\{w_i\}$ is connected following from the connectedness of $\Gamma(X)$.
If $z\in \{a_1,b_1,a_2,b_2\}$, without loss of generality, we suppose that $z=a_i$ with $i=1,2$.
After deleting $z$, the edge $\{a_i, w_i\}$ is removed, whereas the edge $\{w_i, b_i\}$ remains.
Since $b_i\neq z$, the vertex $b_i$ belongs to the connected graph $\Gamma(X)\setminus\{z\}$.
Consequently, $w_i$ remains connected to every remaining original vertex through the edge $\{w_i,b_i\}$.
Thus $\widetilde \Gamma(X)\setminus\{z\}$ is connected for every $z\in X\cup \{w_1,w_2\}$.
Then $\widetilde \Gamma(X)$ is $2$-connected.

By the global vertex form of Menger's theorem (see
\cite[Theorem~3.3.6 (i)]{Diestel}), any two distinct vertices of a
$2$-connected graph can be joined by two independent paths.
Here, the phrase `two independent paths' means that the two paths have no common vertices other than their endpoints.
Applying this result to $w_1$ and $w_2$, we obtain two independent paths from $w_1$ to $w_2$.
 Their union is a cycle containing both $w_1$ and $w_2$.
Since $w_i$ in $\widetilde \Gamma(X)$ only connected to $a_i,b_i$ by two edges $\{a_i,w_i\}$ and $\{w_i,b_i\}$. Hence the cycle through
$w_i$ must contain both edges $\{a_i,w_i\}$ and $\{w_i,b_i\}$.
Deleting $w_i$ from this cycle and replacing the path
$a_i\sim w_i\sim b_i$ by the original edge
$\varepsilon_i=\{a_i,b_i\}$ produces a  cycle in $\Gamma(X)$
containing both $\varepsilon_1$ and $\varepsilon_2$.
Therefore (a) holds.
\end{proof}

\subsection{Incidence algebras}

Let $X$ be a finite poset. The {\em incidence algebra} $I(X,R)$ of $X$ over $R$ is defined to be the set of functions (see \cite{SpDo})
\[
I(X,R):=\{f: X\times X\longrightarrow R \mid f(x,y)=0\ \text{if}\ x\nleq y\},
\]
endowed with the usual $R$-module structure and the multiplication given by {\em convolution}
\[
(fg)(x,y)=\sum_{x\leq z\leq y}f(x,z)g(z,y)
\]
for all $f,g\in I(X,R)$ and $x,y\in X$. For $x\leq y$, let $e_{xy}$ be the function taking value $1$ at
$(x,y)$ and $0$ at all other pairs. We also set $e_x=e_{xx}$. Then
\begin{equation}\label{eq:matrix-unit-product}
e_{xy}e_{uv}=\delta_{yu}e_{xv},~\text{for~all}~x\leq y,\ u\leq v,
\end{equation}
by the definition of convolution, where $\delta_{yu}$ denotes the Kronecker delta.
Notice that the poset $X$ is assumed to be finite. Then the incidence algebra $I(X,R)$
is isomorphic to a subalgebra of the upper triangular matrix algebra ${\rm T}_{|X|}(R)$
and the set $\{e_{xy}\mid x\leq y\}$ forms an $R$-basis of $I(X,R)$, see \cite[Proposition 1.2.7]{SpDo}.

The following observation is important.

\begin{lemma}\label{lem:idempotent-span}
The incidence algebra $I(X,R)$ is $R$-linear spanned by idempotents.
\end{lemma}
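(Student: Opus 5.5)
Since $\{e_{xy}\mid x\leq y\}$ is an $R$-basis of $I(X,R)$, the plan is to show that every basis element $e_{xy}$ lies in the $R$-linear span of the idempotents of $I(X,R)$; then every element of $I(X,R)$ does as well. No hypothesis on $R$ beyond commutativity with unity should be needed, and in particular we will not invoke $\frac12\in R$.

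The diagonal basis elements $e_x=e_{xx}$ are handled first. By \eqref{eq:matrix-unit-product} we have $e_xe_x=e_x$, so each $e_x$ is itself an idempotent.

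For $x<y$, consider the element $e_x+e_{xy}$. Using \eqref{eq:matrix-unit-product} we compute
\[
(e_x+e_{xy})^2=e_xe_x+e_xe_{xy}+e_{xy}e_x+e_{xy}e_{xy}=e_x+e_{xy}+0+0,
\]
since $y\neq x$ gives $e_{xy}e_x=0$ and $e_{xy}e_{xy}=0$. Hence $e_x+e_{xy}$ is an idempotent. It follows that
\[
e_{xy}=(e_x+e_{xy})-e_x
\]
is a difference of two idempotents, and therefore lies in their $R$-span.

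Combining the two cases, every basis element is in the span of idempotents, which proves the lemma. There is no real obstacle in this argument; the only point requiring care is checking the vanishing products $e_{xy}e_x$ and $e_{xy}^2$, both of which rely on $x\neq y$.
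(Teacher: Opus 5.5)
Your proof is correct and follows essentially the same route as the paper: you verify that $e_x+e_{xy}$ is idempotent for $x<y$ and write $e_{xy}=(e_x+e_{xy})-e_x$. You also state explicitly that each $e_x$ is itself an idempotent, which the paper uses only implicitly.
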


\begin{proof}
For $x<y$, there is
\[
(e_x+e_{xy})^2
=e_x^2+e_xe_{xy}+e_{xy}e_x+e_{xy}^2
=e_x+e_{xy}.
\]
Thus $e_x+e_{xy}$ is an idempotent and
$e_{xy}=(e_x+e_{xy})-e_x$ is a linear combination of two
idempotents. Hence the whole algebra is $R$-linear spanned by idempotents.
\end{proof}

Let us write the unity of $I(X,R)$ as
$\one=\sum_{x\in X}e_x$. We next recall the standard description of the center of an
incidence algebra, see \cite[Corollary 1.3.14]{SpDo}.

\begin{proposition}\label{prop:center}
Let $X_1,\ldots,X_m$ be the connected components of
$\Gamma(X)$ and set $\one_i=\sum_{x\in X_i}e_x$.
Then the center
\[
Z(I(X,R))=\bigoplus_{i=1}^m R\one_i.
\]
In particular, if $X$ is connected, then $Z(I(X,R))=R\one$.
\end{proposition}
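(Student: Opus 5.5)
We prove the two inclusions directly, expanding elements in the basis $\{e_{xy}\mid x\leq y\}$. For the easy inclusion, note that $\one_i$ commutes with every basis element $e_{xy}$. If $x\leq y$, then $x\sim y$ or $x=y$, so $x$ and $y$ lie in the same connected component of $\Gamma(X)$. Hence $\one_i e_{xy}=e_{xy}=e_{xy}\one_i$ when $x\in X_i$, and both products vanish otherwise, by \eqref{eq:matrix-unit-product}. Thus each $\one_i$ is central, and so is their $R$-span. The sum is direct because the $\one_i$ are supported on the pairwise disjoint diagonal sets $\{(x,x)\mid x\in X_i\}$, and the $e_x$ are part of an $R$-basis.

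For the reverse inclusion, let $f=\sum_{x\leq y}f(x,y)e_{xy}$ be central. First commute $f$ with the diagonal idempotents $e_u$. By convolution, $(e_uf)(x,y)=\delta_{ux}f(x,y)$ and $(fe_u)(x,y)=\delta_{uy}f(x,y)$. Take $x<y$ and $u=x$: then $e_uf=fe_u$ forces $f(x,y)=\delta_{xy}f(x,y)=0$. So $f=\sum_{x}f(x,x)e_x$ is diagonal.

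Next commute $f$ with $e_{xy}$ for $x<y$. We have $fe_{xy}=f(x,x)e_{xy}$ and $e_{xy}f=f(y,y)e_{xy}$. Since $e_{xy}$ is a basis element, $f(x,x)=f(y,y)$ whenever $x\sim y$. Propagating this equality along paths in $\Gamma(X)$ shows that $x\mapsto f(x,x)$ is constant, say equal to $r_i$, on each component $X_i$. Hence $f=\sum_i r_i\one_i$.

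The special case is immediate: if $X$ is connected, then $m=1$ and $\one_1=\one$. No step is a real obstacle. The only point needing care is that the coefficient comparisons use the fact that $\{e_{xy}\}$ is an $R$-basis, which holds over any commutative $R$; no invertibility in $R$ is needed.
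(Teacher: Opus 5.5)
Your proof is correct and complete. Commuting a central $f$ with the idempotents $e_u$ kills the off-diagonal entries, commuting with $e_{xy}$ forces $f(x,x)=f(y,y)$ along every edge of $\Gamma(X)$, and both the reverse inclusion and the directness of the sum follow from $\{e_{xy}\mid x\leq y\}$ being an $R$-basis. The paper gives no proof of its own here; it simply cites \cite[Corollary 1.3.14]{SpDo}. Your direct computation is essentially the standard argument behind that result, specialized to a finite poset and a commutative ring, so no extra condition such as $f(x,x)$ lying in the center of $R$ is needed.
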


Now we write $\I=I(X,R)$ for convenience. For any $f,g\in\I$, as usual we set $[f,g]=fg-gf$ for their commutator and
$f\circ g=fg+gf$ for their Jordan product. The $R$-submodule
generated by all commutators is denoted by
$[\I,\I]=R$-$\Span\{[f,g]\mid f,g\in\I\}$.
The following two facts will be used repeatedly in the
subsequent computations. Although their proofs can be found
elsewhere in the literature, we include brief proofs here for completeness.

\begin{lemma}\label{lem:commutator-submodule}
We have
$[\I,\I]=R$-$\Span\{e_{xy}\mid x<y\}$.
\end{lemma}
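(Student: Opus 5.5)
We prove the two inclusions separately, working throughout with the $R$-basis $\{e_{xy}\mid x\leq y\}$ of $\I$ and the multiplication rule \eqref{eq:matrix-unit-product}.

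For the inclusion $\supseteq$, take $x<y$. By \eqref{eq:matrix-unit-product}, $e_xe_{xy}=e_{xy}$, while $e_{xy}e_x=\delta_{yx}e_{x}=0$ because $y\neq x$. Hence
\[
[e_x,e_{xy}]=e_{xy},
\]
so every $e_{xy}$ with $x<y$ is a commutator. Since $[\I,\I]$ is an $R$-submodule, it contains $R$-$\Span\{e_{xy}\mid x<y\}$.

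For the inclusion $\subseteq$, it suffices to show that each commutator $[f,g]$ lies in this span, since the span is an $R$-submodule. By bilinearity of the commutator, it is enough to treat basis elements $f=e_{xy}$ and $g=e_{uv}$ with $x\leq y$ and $u\leq v$. Then
\[
[e_{xy},e_{uv}]=\delta_{yu}e_{xv}-\delta_{vx}e_{uy}.
\]
Only the diagonal terms can cause trouble. The term $\delta_{yu}e_{xv}$ is diagonal only when $y=u$ and $x=v$. In that case $x\leq y=u\leq v=x$, so antisymmetry forces $x=y=u=v$, and the commutator is $e_x-e_x=0$. The same argument handles the term $\delta_{vx}e_{uy}$. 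Therefore every nonzero term in the expansion has the form $e_{ab}$ with $a<b$, which gives the inclusion.

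An alternative for $\subseteq$ is to note that the diagonal part of $fg-gf$ at $(x,x)$ equals $f(x,x)g(x,x)-g(x,x)f(x,x)=0$, because the only $z$ with $x\leq z\leq x$ is $z=x$ and $R$ is commutative. No step is a real obstacle; the only point that needs care is using antisymmetry of $\leq$ to rule out diagonal contributions.
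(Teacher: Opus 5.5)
Your proof is correct and essentially matches the paper's. The inclusion $\supseteq$ via $[e_x,e_{xy}]=e_{xy}$ is identical, and the paper proves $\subseteq$ exactly by your alternative argument, namely $[f,g](x,x)=f(x,x)g(x,x)-g(x,x)f(x,x)=0$. Your main basis-by-basis computation, which uses antisymmetry to exclude diagonal terms, is an equally valid and slightly longer version of the same idea.
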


\begin{proof}
For $x<y$, it follows from \eqref{eq:matrix-unit-product} that
$[e_x,e_{xy}]=e_xe_{xy}-e_{xy}e_x=e_{xy}\in [\I,\I]$.
Conversely, for any $f,g\in\I$, the convolution shows that
$[f,g](x,x)=f(x,x)g(x,x)-g(x,x)f(x,x)=0$ for all $x\in X$, since $X$ is a poset.
Hence every commutator is a linear combination of the
$e_{xy}$'s with $x<y$.
\end{proof}

\begin{lemma}\label{lem:coefficient-formulas}
Let $f=\sum_{u\leq v}f(u,v)e_{uv}\in\I$ and $x\in X$.  Then
\begin{align}
[f,e_x]&=\sum_{u<x}f(u,x)e_{ux}
-\sum_{x<v}f(x,v)e_{xv},
\label{eq:comm-ex}\\
e_x\circ f&=2f(x,x)e_x
+\sum_{u<x}f(u,x)e_{ux}
+\sum_{x<v}f(x,v)e_{xv}.
\label{eq:jordan-ex}
\end{align}
\end{lemma}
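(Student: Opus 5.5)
The plan is to expand $f$ in the basis $\{e_{uv}\mid u\leq v\}$ and use $R$-linearity of both sides in $f$. This reduces \eqref{eq:comm-ex} and \eqref{eq:jordan-ex} to computing $[e_{uv},e_x]$ and $e_x\circ e_{uv}$ for each basis element. Those products are determined entirely by the multiplication rule \eqref{eq:matrix-unit-product}: with $e_x=e_{xx}$, we have $e_{uv}e_x=\delta_{vx}e_{ux}$ and $e_xe_{uv}=\delta_{xu}e_{xv}$.

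First, for the commutator, linearity gives
\[
[f,e_x]=\sum_{u\leq v}f(u,v)\bigl(\delta_{vx}e_{ux}-\delta_{xu}e_{xv}\bigr)
=\sum_{u\leq x}f(u,x)e_{ux}-\sum_{x\leq v}f(x,v)e_{xv}.
\]
The diagonal term $u=v=x$ appears once in each sum, as $f(x,x)e_x$, so it cancels. What remains is the sum over $u<x$ minus the sum over $x<v$, which is exactly \eqref{eq:comm-ex}.

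Second, for the Jordan product, the same expansion gives
\[
e_x\circ f=\sum_{x\leq v}f(x,v)e_{xv}+\sum_{u\leq x}f(u,x)e_{ux}.
\]
Now the diagonal term $f(x,x)e_x$ appears once in each sum, so the two copies add to $2f(x,x)e_x$. Separating the strict parts gives \eqref{eq:jordan-ex}.

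There is no genuine obstacle here; the only point needing care is the bookkeeping of the diagonal index $u=v=x$, which cancels in the commutator and doubles in the Jordan product.
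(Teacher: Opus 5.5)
Your proposal is correct and follows essentially the same route as the paper: expand $f$ in the basis, use \eqref{eq:matrix-unit-product} to get $e_xf=\sum_{x\leq v}f(x,v)e_{xv}$ and $fe_x=\sum_{u\leq x}f(u,x)e_{ux}$, then subtract and add. Your explicit note that the diagonal term $f(x,x)e_x$ cancels in the commutator and doubles in the Jordan product is exactly the bookkeeping the paper leaves implicit.
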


\begin{proof}
Using \eqref{eq:matrix-unit-product} term by term, we obtain
\[
e_xf
=\sum_{u\leq v}f(u,v)e_xe_{uv}
=\sum_{x\leq v}f(x,v)e_{xv}.
\]
Similarly,
\[
fe_x
=\sum_{u\leq v}f(u,v)e_{uv}e_x
=\sum_{u\leq x}f(u,x)e_{ux}.
\]
Subtracting these two identities gives \eqref{eq:comm-ex} and adding them gives \eqref{eq:jordan-ex}.
\end{proof}

\subsection{Derivations}

An $R$-linear map $D:\I\to\I$ is called a \emph{derivation} if
\[
D(fg)=D(f)g+fD(g)
\]
for all $f,g\in\I$. For each $h\in\I$, define
$\ad_h:\I\to\I$ by $\ad_h(f)=[h,f]$. Then $\ad_h$ is a
derivation, called the \emph{inner derivation} induced by $h$.

\begin{definition}\label{def:transitive}
An element $\sigma\in \I$ is called \emph{transitive} if
$\sigma(x,z)=\sigma(x,y)+\sigma(y,z)$
whenever $x\leq y\leq z$.
\end{definition}

\begin{lemma}{\rm(\cite[Lemma 2.3]{Xiao})}\label{lem:transitive-derivation}
Let $\sigma$ be transitive. The $R$-linear map $\Delta_\sigma:\I\to\I$
defined by $\Delta_\sigma(e_{xy})=\sigma(x,y)e_{xy}$, for all $x\leq y$, is a
derivation.
\end{lemma}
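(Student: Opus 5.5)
Since $\Delta_\sigma$ is defined by prescribing its values on the $R$-basis $\{e_{xy}\mid x\leq y\}$ of $\I$ and extending $R$-linearly, it is a well-defined $R$-linear map. Both sides of $\Delta_\sigma(fg)=\Delta_\sigma(f)g+f\Delta_\sigma(g)$ are $R$-bilinear in $(f,g)$. It therefore suffices to verify the Leibniz rule on pairs of basis elements $f=e_{xy}$, $g=e_{uv}$ with $x\leq y$ and $u\leq v$.

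We compute both sides using \eqref{eq:matrix-unit-product}. The left side is
\[
\Delta_\sigma(e_{xy}e_{uv})=\delta_{yu}\Delta_\sigma(e_{xv}).
\]
The right side is
\[
\sigma(x,y)e_{xy}e_{uv}+\sigma(u,v)e_{xy}e_{uv}=\delta_{yu}\bigl(\sigma(x,y)+\sigma(u,v)\bigr)e_{xv}.
\]
We split into two cases.

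\emph{Case $y\neq u$.} Both sides vanish.

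\emph{Case $y=u$.} Here $x\leq y\leq v$, so $x\leq v$ by transitivity of the order, and $e_{xv}$ is a basis element. The left side equals $\sigma(x,v)e_{xv}$ and the right side equals $\bigl(\sigma(x,y)+\sigma(y,v)\bigr)e_{xv}$. These coincide precisely because $\sigma$ is transitive (Definition \ref{def:transitive}) applied to the chain $x\leq y\leq v$.

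This completes the verification. There is no real obstacle: the only point needing care is the well-definedness of $\Delta_\sigma$ on the basis and the reduction to basis elements via bilinearity. It is worth noting that transitivity, applied to the chains $x\leq x\leq y$, already forces $\sigma(x,x)=0$, so diagonal basis elements are annihilated by $\Delta_\sigma$. This fact is not needed for the argument.
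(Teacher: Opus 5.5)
Your proof is correct and complete. Since $\Delta_\sigma$ is linear and both sides of the Leibniz rule are bilinear, it suffices to check basis pairs $e_{xy},e_{uv}$ and invoke transitivity of $\sigma$ on the chain $x\leq y\leq v$, which is exactly what you do. This is the standard verification; the paper itself gives no proof and simply cites \cite[Lemma 2.3]{Xiao}. An equivalent entrywise computation, $\Delta_\sigma(fg)(x,v)=\sum_{x\leq y\leq v}\bigl(\sigma(x,y)+\sigma(y,v)\bigr)f(x,y)g(y,v)$, avoids the basis and also works for locally finite posets, but for the finite $X$ considered here your basis reduction is fully adequate.
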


The derivation $\Delta_\sigma$ constructed in
Lemma~\ref{lem:transitive-derivation} is called the
\emph{transitive derivation induced by $\sigma$}.
Notice that in some literatures a transitive derivation is also known as an additive derivation,
see \cite[Section 7.1]{SpDo} for example. When nonlinear derivations are studied on incidence algebras,
an additive derivation will appear naturally, where it means an additive map $D$ of $\I$ satisfying
the Leibniz formula $D(fg)=D(f)g+fD(g)$, see \cite{Yang19} for example.
Therefore, we adopt the concept of transitive derivations here to avoid confusion.

We end this section with a technical lemma.

\begin{lemma}\label{lem:derivation-diagonal}
If $D:\I\to\I$ is a derivation, then
$D(e_x)(y,y)=0$ for all $x,y\in X$.
\end{lemma}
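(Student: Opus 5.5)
The plan is to use the idempotence of $e_x$ together with the Leibniz rule, and then read off diagonal coefficients. Since $e_x^2=e_x$, applying $D$ gives
\[
D(e_x)=D(e_x)e_x+e_xD(e_x).
\]
Write $h=D(e_x)=\sum_{u\leq v}h(u,v)e_{uv}$. We then evaluate both sides of this identity at a diagonal pair $(y,y)$.

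By the convolution formula, or directly from the computations of $e_xf$ and $fe_x$ in the proof of Lemma~\ref{lem:coefficient-formulas}, the element $he_x$ is supported on pairs $(u,x)$ and $e_xh$ on pairs $(x,v)$. Hence
\[
(he_x)(y,y)=\delta_{xy}h(x,x), \qquad (e_xh)(y,y)=\delta_{xy}h(x,x).
\]
Equivalently, by \eqref{eq:jordan-ex}, the diagonal part of $e_x\circ h$ is $2h(x,x)e_x$. Comparing $(y,y)$-coefficients therefore gives $h(y,y)=2\delta_{xy}h(x,x)$.

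There are two cases.
\begin{enumerate}
\item[(i)] If $y\neq x$, the right-hand side vanishes, so $h(y,y)=0$ immediately.
\item[(ii)] If $y=x$, we get $h(x,x)=2h(x,x)$, so $h(x,x)=0$.
\end{enumerate}
Case (ii) needs no invertibility of $2$, since subtracting $h(x,x)$ from both sides works over any commutative ring.

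There is no real obstacle here. The only point requiring care is to compute the diagonal entries of $he_x$ and $e_xh$ correctly from the convolution, so as to avoid picking up off-diagonal terms. This is exactly what Lemma~\ref{lem:coefficient-formulas} already records.
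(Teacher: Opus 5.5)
Your proof is correct and follows the paper's argument: the paper also applies $D$ to $e_x^2=e_x$ to get $D(e_x)=D(e_x)e_x+e_xD(e_x)$, then multiplies by $e_y$ on both sides, which is the same as reading off the $(y,y)$-coefficient. Your case split $y\neq x$ versus $y=x$ (where $h(x,x)=2h(x,x)$ forces $h(x,x)=0$ without needing $\frac12\in R$) spells out the step the paper leaves implicit.
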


\begin{proof}
In fact, the statement follows from \cite[Theorem 2.2]{Xiao}.
We also give a proof here for reader's convenience. Notice that
$D(e_x)=D(e_x)e_x+e_xD(e_x)$ for all $x\in X$.
Multiplying this identity by $e_y$ from both sides,
we can get $D(e_x)(y,y)=0$ for all $x,y\in X$.
\end{proof}

\section{Two-sided zero product determined incidence algebras}\label{sec:two-sided-zpd}

Let $F_1, F_2, F_3, F_4: \I \rightarrow \I$ be $R$-linear maps such that
\[
F_1(f)g+fF_2(g)+F_3(g)f+gF_4(f)=0
\]
whenever $fg=gf=0$. In this paper, our specific objective is to
describe the form of $F_i$'s on incidence algebras. Let us define the $R$-bilinear map
$\varphi: \I\times\I \rightarrow \I$ by $\varphi(f,g):=F_1(f)g+fF_2(g)+F_3(g)f+gF_4(f)$.
Then $\varphi(f,g)=0$, whenever $fg=gf=0$. Hence our objective is naturally connected with the
notion of a two-sided zero product determined algebra \cite{BajukBresar22}.
The purpose of this section is to prove that the incidence algebra $\I$ is two-sided zero product determined.

\begin{definition}\label{def:two-sided-zpd}
An $R$-algebra $\mathcal A$ is called \emph{two-sided zero product
determined} ($2$-zpd for short), if for every $R$-module
$V$ and every bilinear map $\Phi:\mathcal A\times\mathcal A\to V$
satisfying
\[
\Phi(a,b)=0,~\text{whenever}~ab=ba=0,
\]
then there exist $R$-linear maps $\tau_1,\tau_2:\mathcal A\to V$ such that
\[
\Phi(x,y)=\tau_1(xy)+\tau_2(yx),~\text{for~all}~x,y\in\mathcal A.
\]
\end{definition}

We start our proof by recalling a result on bilinear maps over algebras spanned linearly by idempotents.

\begin{lemma}{\rm (\cite[Theorem 3.5]{Ghahramani14})} \label{lem:idempotent-bilinear}
Let $\mathcal A$ be an $R$-algebra with unity $\one$ which is spanned as an $R$-module by
its idempotents. Let $V$ be an $R$-module and
$\Phi:\mathcal A\times\mathcal A\to V$ be a bilinear map such that
\[
\Phi(a,b)=0,~\text{whenever}~ab=ba=0.
\]
Then, for all $x,y\in\mathcal A$,
\begin{align}
\Phi(x,\one)&=\Phi(\one,x),
\label{eq:phi-unit-symmetry}\\
\Phi(x,y)+\Phi(y,x)&=\Phi(xy+yx,\one).
\label{eq:phi-jordan-unit}
\end{align}
\end{lemma}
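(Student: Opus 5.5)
The plan is to reduce everything to idempotents, exploiting the fact that an idempotent $p$ and its complement $q:=\one-p$ satisfy $pq=qp=0$. Both identities are linear in $x$, and $\mathcal A$ is spanned by idempotents. So it suffices to prove \eqref{eq:phi-unit-symmetry} for $x=p$ an idempotent, and \eqref{eq:phi-jordan-unit} for $x=p$ an idempotent and $y$ arbitrary.

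\textbf{Step 1: the identity \eqref{eq:phi-unit-symmetry}.} Since $pq=qp=0$, the hypothesis gives $\Phi(p,q)=0=\Phi(q,p)$. Therefore
\[
\Phi(p,\one)=\Phi(p,p)+\Phi(p,q)=\Phi(p,p)=\Phi(p,p)+\Phi(q,p)=\Phi(\one,p).
\]
Linearity then yields \eqref{eq:phi-unit-symmetry} for all $x$.

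\textbf{Step 2: Peirce decomposition.} Fix an idempotent $p$ and an arbitrary $y$, and write
\[
y=pyp+pyq+qyp+qyq .
\]
We verify $\Phi(p,z)+\Phi(z,p)=\Phi(pz+zp,\one)$ for each of the four pieces $z$.

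For $z=qyq$ we have $pz=zp=0$, so both sides vanish.

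For $z=pyp$, note $qz=zq=0$, which gives $\Phi(q,z)=\Phi(z,q)=0$. Hence $\Phi(p,z)=\Phi(\one,z)$ and $\Phi(z,p)=\Phi(z,\one)$. By Step 1 the sum equals $2\Phi(z,\one)=\Phi(pz+zp,\one)$.

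\textbf{Step 3: the off-diagonal pieces (the main obstacle).} For $z=pyq$ or $z=qyp$, neither $pz$ nor $zp$ vanishes, so the hypothesis does not apply directly. The trick is to perturb the idempotent. Put $t=pyq$; then $t^2=0$, $pt=t=tq$ and $tp=qt=0$.

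The element $p+t$ is idempotent, with complement $q-t$. Moreover $(p+t)(q-t)=(q-t)(p+t)=0$. Applying the hypothesis to both orders and expanding, we use $\Phi(p,q)=\Phi(q,p)=0$ and also $\Phi(t,t)=0$ (since $t\cdot t=0$ in both orders). This yields
\[
\Phi(p,t)=\Phi(t,q),\qquad \Phi(q,t)=\Phi(t,p).
\]
Consequently
\[
\Phi(p,t)+\Phi(t,p)=\Phi(t,q)+\Phi(t,p)=\Phi(t,\one)=\Phi(pt+tp,\one).
\]
The case $z=qyp$ is symmetric, using the idempotent $p+qyp$ with complement $q-qyp$.

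\textbf{Step 4: conclusion.} Summing the four cases gives $\Phi(p,y)+\Phi(y,p)=\Phi(py+yp,\one)$. Linearity in $p$ then gives \eqref{eq:phi-jordan-unit} for all $x,y$.
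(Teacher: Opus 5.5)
The paper gives no proof of this lemma; it imports it from \cite{Ghahramani14}. Your argument is a correct, self-contained proof that can stand in for the citation, so there is no internal route to compare against.

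I checked the steps:
\begin{itemize}
\item The reduction to $x=p$ idempotent is legitimate, because both identities are $R$-linear in $x$ and $\mathcal A$ is spanned by idempotents.
\item Step~1 follows from $\Phi(p,\one-p)=\Phi(\one-p,p)=0$.
\item In Step~2, the $pyp$ case correctly uses the already linearly-extended Step~1 for the non-idempotent $z$.
\item In Step~3, the relations $(p+t)^2=p+t$, $(p+t)(q-t)=(q-t)(p+t)=0$ and $\Phi(t,t)=0$ (from $t^2=0$) give $\Phi(p,t)=\Phi(t,q)$. Hence $\Phi(p,t)+\Phi(t,p)=\Phi(t,\one)=\Phi(pt+tp,\one)$.
\item The $qyp$ piece works the same way, since $(p+qyp)(q-qyp)=(q-qyp)(p+qyp)=0$.
\end{itemize}
The second relation $\Phi(q,t)=\Phi(t,p)$ that you derive is never used, so it can be dropped.

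Your argument uses only the unit and the spanning by idempotents, with no division by $2$, so it does not depend on the standing assumption $\tfrac12\in R$ of Section~4. Your perturbed idempotent $p+pyq$, paired with its orthogonal partner $q-pyq$, is the same device the paper uses in Lemma~\ref{lem:psi-chain} with $e_x+e_{xy}$ and $e_y-e_{xy}$. Your proof therefore fits naturally with the rest of Section~3.
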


It is clear that the incidence algebra $\I$ satisfies all the hypotheses of
Lemma~\ref{lem:idempotent-bilinear} by Lemma~\ref{lem:idempotent-span}.
Let $V$ be an $R$-module and let $\Phi:\I\times\I\to V$ be an arbitrary bilinear map with
$\Phi(f,g)=0$ whenever $fg=gf=0$.
Define
\begin{equation}\label{eq:psi-definition}
\Psi(f,g):=\Phi(f,g)-\Phi(fg,\one).
\end{equation}

\begin{lemma}\label{lem:psi-properties}
The bilinear map $\Psi$ defined in \eqref{eq:psi-definition} satisfies
\begin{align}
\Psi(f,g)&=0,~\text{whenever}~fg=gf=0;\notag\\
\Psi(f,g)+\Psi(g,f)&=0,~\text{for~all}~f,g\in\I;\notag\\
\Psi(e_x,e_x)&=0,~\text{for~all}~x\in X.\notag
\end{align}
\end{lemma}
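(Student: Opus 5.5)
The three properties follow directly from the definition \eqref{eq:psi-definition} of $\Psi$ together with the identities \eqref{eq:phi-unit-symmetry} and \eqref{eq:phi-jordan-unit}, which hold for $\Phi$ because $\I$ is unital and spanned by idempotents (Lemma~\ref{lem:idempotent-span}). The plan is to check each property in turn, using only bilinearity of $\Phi$ and these two identities.

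\textbf{First property.} Suppose $fg=gf=0$. Then $\Phi(f,g)=0$ by hypothesis, and $\Phi(fg,\one)=\Phi(0,\one)=0$ by bilinearity. Hence $\Psi(f,g)=0$.

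\textbf{Second property.} For arbitrary $f,g\in\I$ we expand
\[
\Psi(f,g)+\Psi(g,f)=\Phi(f,g)+\Phi(g,f)-\Phi(fg,\one)-\Phi(gf,\one).
\]
By bilinearity, $\Phi(fg,\one)+\Phi(gf,\one)=\Phi(fg+gf,\one)$. Identity \eqref{eq:phi-jordan-unit} states precisely that $\Phi(f,g)+\Phi(g,f)=\Phi(fg+gf,\one)$, so the sum vanishes.

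\textbf{Third property.} Here $\Psi(e_x,e_x)=\Phi(e_x,e_x)-\Phi(e_x,\one)$, since $e_x^2=e_x$. Write $\one=e_x+(\one-e_x)$. The elements $e_x$ and $\one-e_x$ are orthogonal idempotents, so $e_x(\one-e_x)=(\one-e_x)e_x=0$, and therefore $\Phi(e_x,\one-e_x)=0$. By linearity in the second argument, $\Phi(e_x,\one)=\Phi(e_x,e_x)$, which gives $\Psi(e_x,e_x)=0$.

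Alternatively, the second property with $f=g=e_x$ gives $2\Psi(e_x,e_x)=0$. That route would require $\tfrac12\in R$, which is not assumed in this section, so the orthogonal-idempotent argument above is the one to use.

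None of these steps presents a real obstacle; the only point needing care is that the third property is proved without dividing by $2$.
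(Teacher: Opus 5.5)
Your proof is correct and follows essentially the same route as the paper: the first property is immediate from the definition, the second is exactly identity \eqref{eq:phi-jordan-unit}, and the third uses $\Phi(e_x,\one-e_x)=0$ just as the paper does. Your remark that the halving shortcut is unavailable is also accurate, since $\tfrac12\in R$ is only assumed from Section~4 onward.
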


\begin{proof}
By \eqref{eq:psi-definition}, the first statement is clear.
Combining Lemma~\ref{lem:idempotent-span} and Lemma~\ref{lem:idempotent-bilinear}, we have
$\Phi(f,g)+\Phi(g,f)=\Phi(fg+gf,\one)$ for all $f,g\in \I$.
Consequently,
\[
\begin{aligned}
\Psi(f,g)+\Psi(g,f)
&=\Phi(f,g)+\Phi(g,f)
-\Phi(fg,\one)-\Phi(gf,\one)\\
&=\Phi(fg+gf,\one)-\Phi(fg+gf,\one)=0.
\end{aligned}
\]
For the last statement, since $e_x(\one-e_x)=0=(\one-e_x)e_x$, so
$0=\Phi(e_x,\one-e_x)=\Phi(e_x,\one)-\Phi(e_x,e_x)$.
Thus
\[
\Psi(e_x,e_x)
=\Phi(e_x,e_x)-\Phi(e_x^2,\one)
=\Phi(e_x,e_x)-\Phi(e_x,\one)=0
\]
for all $x\in X$.
\end{proof}

\begin{lemma}\label{lem:psi-chain}
With notations as above, if $x\leq y\leq z$, then
\begin{equation}\label{eq:psi-chain}
\Psi(e_{xy},e_{yz})=\Psi(e_x,e_{xz}).
\end{equation}
\end{lemma}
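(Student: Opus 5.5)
The plan is to treat the degenerate cases separately and then handle the generic case $x<y<z$ with one well-chosen pair of two-sided orthogonal elements. Throughout I use the three properties of $\Psi$ from Lemma~\ref{lem:psi-properties}: $\Psi$ vanishes on pairs with $fg=gf=0$, it is antisymmetric, and it vanishes on $(e_x,e_x)$.

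\emph{Degenerate cases.} If $x=y$, then $e_{xy}=e_x$ and $e_{yz}=e_{xz}$, so \eqref{eq:psi-chain} is trivial. This also covers $x=y=z$.

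If $x<y=z$, we must show $\Psi(e_{xy},e_y)=\Psi(e_x,e_{xy})$. By \eqref{eq:matrix-unit-product},
\[
e_{xy}(\one-e_x-e_y)=e_{xy}-e_{xy}=0
\quad\text{and}\quad
(\one-e_x-e_y)e_{xy}=e_{xy}-e_{xy}=0,
\]
so $\Psi(e_{xy},\one-e_x-e_y)=0$. Moreover, the definition \eqref{eq:psi-definition} gives $\Psi(e_{xy},\one)=\Phi(e_{xy},\one)-\Phi(e_{xy},\one)=0$. Combining these two facts yields $\Psi(e_{xy},e_y)=-\Psi(e_{xy},e_x)$. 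Antisymmetry then turns the right-hand side into $\Psi(e_x,e_{xy})$, as required.

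\emph{Main case $x<y<z$.} Here all three points are distinct, and I take
\[
a=e_x+e_{xy},\qquad b=e_{yz}-e_{xz}.
\]
By \eqref{eq:matrix-unit-product}, the only nonzero products in $ab$ are $e_xe_{xz}$ and $e_{xy}e_{yz}$. With the signs in $b$ this gives $ab=-e_{xz}+e_{xz}=0$. In $ba$ every product of basis elements vanishes, because $z\neq x$ and $y\neq x$, so $ba=0$.

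Hence $\Psi(a,b)=0$, and expanding by bilinearity gives
\[
\Psi(e_x,e_{yz})-\Psi(e_x,e_{xz})+\Psi(e_{xy},e_{yz})-\Psi(e_{xy},e_{xz})=0.
\]
The first and last terms vanish, since both pairs are two-sided orthogonal:
\begin{itemize}
\item $e_xe_{yz}=0$ and $e_{yz}e_x=0$, because $y\neq x$ and $z\neq x$;
\item $e_{xy}e_{xz}=0$ and $e_{xz}e_{xy}=0$, because $y\neq x$ and $z\neq x$.
\end{itemize}
What remains is exactly \eqref{eq:psi-chain}.

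The only real difficulty is choosing the auxiliary pair $(a,b)$ so that the unwanted cross terms are themselves two-sided zero products. Once that choice is made, everything reduces to routine multiplication of matrix units.
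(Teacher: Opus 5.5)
Your proof is correct, and in the main case $x<y<z$ it is the same as the paper's: the same orthogonal pair $e_x+e_{xy}$, $e_{yz}-e_{xz}$, and the same two vanishing cross terms. The only difference is the case $x<y=z$, where you combine $\Psi(f,\one)=0$, the two-sided annihilator $\one-e_x-e_y$ of $e_{xy}$, and antisymmetry; the paper instead uses the pair $p=e_x+e_{xy}$, $q=e_y-e_{xy}$ with $pq=qp=0$, which needs only that $\Psi$ vanishes on two-sided zero products (including $\Psi(e_{xy},e_{xy})=0$, since $e_{xy}^2=0$) and not the antisymmetry coming from Lemma~\ref{lem:idempotent-bilinear}, though both arguments are valid.
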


\begin{proof}
Since $X$ is a poset, we can assume $x\neq z$ without loss of generality.
Furthermore, we only need to consider $x<y\leq z$.

Suppose $x<y<z$ and set
$p=e_x+e_{xy}$ and $q=e_{yz}-e_{xz}$. A direct computation gives that $pq=qp=0$.
Hence $\Psi(p,q)=0$ by Lemma~\ref{lem:psi-properties}. In other words,
\[
0=\Psi(e_x,e_{yz})-\Psi(e_x,e_{xz})
+\Psi(e_{xy},e_{yz})-\Psi(e_{xy},e_{xz}).
\]
Since $e_x e_{yz}=e_{yz}e_x=0$ and $e_{xy}e_{xz}=e_{xz}e_{xy}=0$, we have
$\Psi(e_x,e_{yz})=0=\Psi(e_{xy},e_{xz})$ by Lemma~\ref{lem:psi-properties} again and hence
obtain the desired \eqref{eq:psi-chain}.

Suppose $x<y=z$ and set $p=e_x+e_{xy}$ and $q=e_y-e_{xy}$. Similarly, we have
\[
0=\Psi(p,q)=\Psi(e_x,e_y)-\Psi(e_x,e_{xy})
+\Psi(e_{xy},e_y)-\Psi(e_{xy},e_{xy}).
\]
We also have that $\Psi(e_x,e_y)$ and $\Psi(e_{xy},e_{xy})$ are zero by Lemma~\ref{lem:psi-properties}.
Therefore $\Psi(e_{xy},e_y)=\Psi(e_x,e_{xy})$, which is
\eqref{eq:psi-chain} for $z=y$.
\end{proof}

We are now ready to establish the main result of this section.

\begin{theorem}\label{thm:incidence-2zpd}
For every finite poset $X$, the incidence algebra $I(X,R)$ is $2$-zpd.
\end{theorem}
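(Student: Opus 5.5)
The plan is to show that the bilinear map $\Psi$ of \eqref{eq:psi-definition} is a ``commutator functional''. That is, I will look for an $R$-linear map $\tau:\I\to V$ with
\[
\Psi(f,g)=\tau([f,g])=\tau(fg)-\tau(gf)\quad\text{for all } f,g\in\I .
\]
Granting this, \eqref{eq:psi-definition} gives $\Phi(f,g)=\Phi(fg,\one)+\tau(fg)-\tau(gf)$. So we may take $\tau_1(h)=\Phi(h,\one)+\tau(h)$ and $\tau_2=-\tau$, both of which are $R$-linear, and Definition~\ref{def:two-sided-zpd} is satisfied.

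To define $\tau$, I use the $R$-basis $\{e_{xy}\mid x\le y\}$ and set
\[
\tau(e_x)=0,\qquad \tau(e_{xy})=\Psi(e_x,e_{xy})\ \ (x<y),
\]
extending $R$-linearly. Since $V$ is an arbitrary $R$-module and $\I$ is free on this basis, this is legitimate. Because both sides of the desired identity are bilinear, it suffices to check
\[
\Psi(e_{ab},e_{cd})=\tau(\delta_{bc}e_{ad}-\delta_{da}e_{cb})\quad\text{for all } a\le b,\ c\le d,
\]
using \eqref{eq:matrix-unit-product}. I split into four cases according to $\delta_{bc}$ and $\delta_{da}$.

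\emph{Case $b\neq c$ and $d\neq a$.} Both products $e_{ab}e_{cd}$ and $e_{cd}e_{ab}$ vanish, so $\Psi=0$ by Lemma~\ref{lem:psi-properties}, and the right side is also $0$.

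\emph{Case $b=c$ and $d\neq a$.} Lemma~\ref{lem:psi-chain} with $a\le b\le d$ gives $\Psi(e_{ab},e_{bd})=\Psi(e_a,e_{ad})$. Since $a\neq d$ we have $a<d$, so this equals $\tau(e_{ad})$ by definition.

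\emph{Case $d=a$ and $b\neq c$.} Antisymmetry from Lemma~\ref{lem:psi-properties} and the previous case give $\Psi(e_{ab},e_{ca})=-\Psi(e_{ca},e_{ab})=-\tau(e_{cb})$.

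\emph{Case $b=c$ and $d=a$.} Then $a\le b\le a$, so $a=b=c=d$. Here $\Psi(e_a,e_a)=0$ by Lemma~\ref{lem:psi-properties}, and $\tau(e_a-e_a)=0$ as well.

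The only delicate points are two. First, the chain identity (Lemma~\ref{lem:psi-chain}) must be applied in the degenerate situations $a=b$ or $b=d$; it was stated for $x\le y\le z$, so it covers these. Second, the diagonal values of $\tau$ are irrelevant, since commutators have zero diagonal by Lemma~\ref{lem:commutator-submodule}, so setting $\tau(e_x)=0$ is consistent. Once these are noted, the argument is a routine verification on basis elements and I expect no real obstacle.
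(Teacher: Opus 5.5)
Your proof is correct and follows essentially the same route as the paper. The paper defines $T$ on $[\I,\I]$ by $T(e_{xy})=\Psi(e_x,e_{xy})$ and checks $\Psi(e_{xy},e_{uv})=T([e_{xy},e_{uv}])$ with the same case split via Lemmas~\ref{lem:psi-properties} and \ref{lem:psi-chain}; it then extends $T$ to $\I$ through the projection killing the diagonal, which is exactly your $\tau$ with $\tau(e_x)=0$, and it takes the same $\tau_1(h)=\Phi(h,\one)+\tau(h)$ and $\tau_2=-\tau$.
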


\begin{proof}
Let $V$ be an arbitrary $R$-module and
$\Phi:\I\times\I\to V$ be an arbitrary $R$-bilinear map satisfying
$\Phi(f,g)=0$ whenever $fg=gf=0$. Let $\Psi$ be the bilinear map
defined by the equation \eqref{eq:psi-definition}. By Lemma~\ref{lem:commutator-submodule}, we can
define an $R$-linear map $T:[\I,\I]\to V$ by
$T(e_{xy})=\Psi(e_x,e_{xy})$ for every $x<y$.

\smallskip
\noindent\emph{Claim.}
$\Psi(e_{xy},e_{uv})=T([e_{xy},e_{uv}])$, for all $x\leq y$ and  $u\leq v$.

There are three cases appearing.

{\bf Case 1.} $y\neq u$ and $v\neq x$.
At this case $e_{xy}e_{uv}=0=e_{uv}e_{xy}$, then $\Psi(e_{xy},e_{uv})=0$ by Lemma~\ref{lem:psi-properties}.
Moreover, $[e_{xy},e_{uv}]=0$ and hence
$\Psi(e_{xy},e_{uv})=T([e_{xy},e_{uv}])$.

{\bf Case 2.} $y=u$. If $v=x$, then
$x\leq y=u\leq v=x$. The antisymmetry of a partial order implies
$x=y=u=v$. Consequently, by Lemma~\ref{lem:psi-properties},
$\Psi(e_{xy},e_{uv})=\Psi(e_x,e_x)=0=T([e_{xy},e_{uv}])$.

Assume now that $y=u$ and $v\neq x$. Since
$x\leq y\leq v$, the inequality $v\neq x$ implies $x<v$.
On one hand, Lemma~\ref{lem:psi-chain} gives
\[
\Psi(e_{xy},e_{yv})=\Psi(e_x,e_{xv})=T(e_{xv}).
\]
On the other hand, it is clear that $e_{xv}=[e_{xy},e_{yv}]$. Hence
$\Psi(e_{xy},e_{yv})=T([e_{xy},e_{yv}])$.

{\bf Case 3.} $v=x$.
The desired result follows from the {\bf Case 2}, since $\Psi$
is skew-symmetric by Lemma~\ref{lem:psi-properties}.
This proves our claim.

Let $f=\sum_{x\leq y}f(x,y)e_{xy}$ and $g=\sum_{u\leq v}g(u,v)e_{uv}$.
By the just proved Claim, we have
\[
\begin{aligned}
\Psi(f,g)
&=\sum_{x\leq y}\sum_{u\leq v}
  f(x,y)g(u,v)\Psi(e_{xy},e_{uv})\\
&=\sum_{x\leq y}\sum_{u\leq v}
  f(x,y)g(u,v)T([e_{xy},e_{uv}])\\
&=T([f,g]).
\end{aligned}
\]
Recalling the definition of $\Psi$, we have therefore proved
\begin{align}\label{eq11}
\Phi(f,g)=\Phi(fg,\one)+T([f,g]),~\text{for~all}~f,g\in\I.
\end{align}

Comparing the identity \eqref{eq11} and the definition of a $2$-zpd algebra, we need to lift the
linear map $T$ from $[\I,\I]$ to $\I$. Set $\I_{\mathrm{diag}}=R$-$\Span\{e_x \mid x\in X\}$.
It is clear that there is an $R$-module decomposition
\[
\I=\I_{\mathrm{diag}}\oplus[\I,\I].
\]
Let $\pi_{\mathrm{off}}:\I\to[\I,\I]$ be the corresponding
projection and define $\widetilde T=T\circ\pi_{\mathrm{off}}$.
Thus $\widetilde T$ is an $R$-linear extension of $T$ from
$[\I,\I]$ to $\I$ such that $\widetilde T([f,g])=T([f,g])$, for all $f,g\in \I$.

Define two $R$-linear maps $\tau_1,\tau_2:\I\to V$ by
\[
\tau_1(h)=\Phi(h,\one)+\widetilde T(h),
\qquad
\tau_2(h)=-\widetilde T(h).
\]
For any $f,g\in\I$, by \eqref{eq11}, we have
\[
\begin{aligned}
\tau_1(fg)+\tau_2(gf)
&=\Phi(fg,\one)+\widetilde T(fg)-\widetilde T(gf)\\
&=\Phi(fg,\one)+\widetilde T(fg-gf)\\
&=\Phi(fg,\one)+T([f,g])\\
&=\Phi(f,g).
\end{aligned}
\]
Thus $\I$ is $2$-zpd.
\end{proof}

\begin{definition}
An $R$-algebra $\mathcal A$ is called \emph{zero Lie product determined} (zLpd for short), if for every $R$-module $V$ and every
bilinear map $f:\mathcal A\times \mathcal A\to V$ satisfying
\[
\Phi(a,b)=0,~\text{whenever}~[a,b]=0,
\]
then there exists an $R$-linear map $\tau:\mathcal A\to V$ such that
$\Phi(x,y)=\tau([x,y])$ for all $x,y\in \mathcal A$.
\end{definition}

\begin{corollary}
For every finite poset $X$, the incidence algebra $I(X,R)$ is zLpd.
\end{corollary}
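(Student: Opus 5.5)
We derive the corollary from Theorem~\ref{thm:incidence-2zpd}.

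Let $V$ be an $R$-module and $\Phi:\I\times\I\to V$ a bilinear map with $\Phi(a,b)=0$ whenever $[a,b]=0$. If $ab=ba=0$, then $[a,b]=0$, so $\Phi(a,b)=0$. Hence $\Phi$ satisfies the hypothesis of Theorem~\ref{thm:incidence-2zpd}, and there are $R$-linear maps $\tau_1,\tau_2:\I\to V$ with
\[
\Phi(x,y)=\tau_1(xy)+\tau_2(yx)\qquad\text{for all } x,y\in\I .
\]

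Next we use the stronger vanishing hypothesis. For every $x\in\I$ we have $[x,\one]=0$ and $[x,x]=0$, so $\Phi(x,\one)=0$ and $\Phi(x,x)=0$. The first equality gives $\tau_1(x)+\tau_2(x)=0$ for all $x$, that is, $\tau_2=-\tau_1$. Substituting back,
\[
\Phi(x,y)=\tau_1(xy)-\tau_1(yx)=\tau_1([x,y])\qquad\text{for all } x,y\in\I .
\]
So $\tau:=\tau_1$ is the required $R$-linear map, and $\I$ is zLpd.

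The only step that needs a moment's thought is the passage from zero Lie products to two-sided zero products: the set of pairs with $ab=ba=0$ is contained in the set with $[a,b]=0$, so the zLpd hypothesis is the stronger one and Theorem~\ref{thm:incidence-2zpd} applies. After that, the commuting pairs $(x,\one)$ force $\tau_2=-\tau_1$ directly. No use of the poset structure beyond Theorem~\ref{thm:incidence-2zpd} is needed.
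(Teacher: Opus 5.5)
Your proof is correct and follows the same route the paper intends. The paper only says the corollary follows as in Proposition~2.1 of Bajuk--Bre\v{s}ar, which is exactly this deduction: apply the $2$-zpd property (whose hypothesis $ab=ba=0$ is implied by $[a,b]=0$), then use $[x,\one]=0$ to force $\tau_2=-\tau_1$. The observation $\Phi(x,x)=0$ is harmless but not needed.
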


The above corollary can be proved similarly as \cite[Proposition 2.1]{BajukBresar22}.
However, it is of independent interest in our opinion, since there are only a few examples of zLpd algebras,
but zLpd algebras are important in Lie theory (see \cite{BresarZPD21}).
We shall study the general structure of zLpd algebras in a future paper.

\section{Functional identity}\label{sec:functional-identities}

Let $F_1, F_2, F_3, F_4: \I \rightarrow \I$ be $R$-linear maps such that
\[
F_1(f)g+fF_2(g)+F_3(g)f+gF_4(f)=0
\]
whenever $fg=gf=0$. This section is devoted to describing the form of $F_i$'s.
As stated in the beginning of Section \ref{sec:two-sided-zpd}, the $R$-bilinear map $\varphi(f,g)=F_1(f)g+fF_2(g)+F_3(g)f+gF_4(f)$
satisfies $\varphi(f,g)=0$, whenever $fg=gf=0$. Symmetrically, there is also $\varphi(g,f)=0$, whenever $fg=gf=0$.
Therefore, in order to achieve our goal, it is natural to study the symmetric version $\frac{1}{2}(\varphi(f,g)+\varphi(g,f))$
and the skew-symmetric version $\frac{1}{2}(\varphi(f,g)-\varphi(g,f))$ separately.
Moreover, it is not surprise to
{\em assume that the commutative ring $R$ contains $\frac{1}{2}$ throughout this section.}

\begin{lemma}\label{lem:antisymmetric-reduction}
Let $A,B:\I\to\I$ be $R$-linear maps such that
\begin{equation}\label{eq:antisymmetric-local}
A(f)g-A(g)f+fB(g)-gB(f)=0,~\text{whenever}~fg=gf=0.
\end{equation}
Then there exists an $R$-linear map $L:\I\to\I$ with
$L(\one)=0$ such that
\begin{equation}\label{eq:antisymmetric-normalized-forms}
A(f)=A(\one)f+L(f),
\qquad
B(f)=fB(\one)+L(f),
\end{equation}
and
\begin{equation}\label{eq:L-commuting-identity}
[L(f),g]+[f,L(g)]=0,~\text{whenever}~[f,g]=0.
\end{equation}
\end{lemma}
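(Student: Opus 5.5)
Set $\Phi(f,g):=A(f)g-A(g)f+fB(g)-gB(f)$. This is an $R$-bilinear map $\I\times\I\to\I$ with $\Phi(f,g)=0$ whenever $fg=gf=0$. It is also skew-symmetric: $\Phi(g,f)=-\Phi(f,g)$.

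\textbf{Step 1: apply Theorem~\ref{thm:incidence-2zpd}.} This gives $R$-linear maps $\tau_1,\tau_2$ with
\[
\Phi(f,g)=\tau_1(fg)+\tau_2(gf).
\]
Skew-symmetry yields $\tau_1(fg)+\tau_2(gf)=-\tau_1(gf)-\tau_2(fg)$. Putting $g=\one$ gives $\tau_1+\tau_2=0$ on $\I$. Hence, with $\tau=\tau_1$,
\[
\Phi(f,g)=\tau([f,g]).
\]

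\textbf{Step 2: define $L$ and get the normal forms.} Setting $g=\one$ and using $[f,\one]=0$:
\[
A(f)-A(\one)f+fB(\one)-B(f)=0 .
\]
Define
\[
L(f):=A(f)-A(\one)f .
\]
Then $L(\one)=0$ and $A(f)=A(\one)f+L(f)$. The displayed identity gives $B(f)=L(f)+fB(\one)$. This is \eqref{eq:antisymmetric-normalized-forms}.

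\textbf{Step 3: derive \eqref{eq:L-commuting-identity}.} Substitute the normal forms into $\Phi$:
\[
\Phi(f,g)=L(f)g-L(g)f+fL(g)-gL(f)+A(\one)fg-A(\one)gf+fgB(\one)-gfB(\one).
\]
The first four terms equal $[L(f),g]+[f,L(g)]$. So
\[
\tau([f,g])=[L(f),g]+[f,L(g)]+A(\one)[f,g]+[f,g]B(\one).
\]
If $[f,g]=0$, every term except the first two vanishes, and we obtain \eqref{eq:L-commuting-identity}.

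The only nontrivial input is Step 1, namely Theorem~\ref{thm:incidence-2zpd} together with the skew-symmetry trick. Everything else is bookkeeping; the hypothesis $\frac12\in R$ is not even needed here.
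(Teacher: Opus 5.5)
Your proof is correct and follows essentially the paper's route: Theorem~\ref{thm:incidence-2zpd} plus skew-symmetry turns $\Phi$ into a function of $[f,g]$, and then $g=\one$ identifies the two normalized maps. The paper merely normalizes to $A_0,B_0$ before invoking the theorem rather than after. One slip: putting $g=\one$ in your skew-symmetry relation only gives $2(\tau_1+\tau_2)=0$, so your argument does use $\frac12\in R$ (as the paper's does), contrary to your closing remark. You could genuinely avoid it by noting that $\Phi(f,f)=0$, so $(\tau_1+\tau_2)(e)=\Phi(e,e)=0$ for every idempotent $e$, and idempotents span $\I$ by Lemma~\ref{lem:idempotent-span}.
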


\begin{proof}
Let us define $A_0(f):=A(f)-A(\one)f$ and $B_0(f):=B(f)-fB(\one)$, for all $f\in\I$.
Then $A_0(\one)=B_0(\one)=0$ and a direct computation shows $A_0(f)g-A_0(g)f+fB_0(g)-gB_0(f)=0$ whenever $fg=gf=0$.
Define the $R$-bilinear map $\Phi(f,g):=A_0(f)g-A_0(g)f+fB_0(g)-gB_0(f)$.
Then it is clear that $\Phi$ is skew-symmetric and $\Phi(f,g)=0$ whenever $fg=gf=0$.
By Theorem~\ref{thm:incidence-2zpd}, the incidence algebra $\I$ is $2$-zpd, and hence there exist linear maps
$\tau_1,\tau_2:\I \to \I$ such that
\[
\Phi(f,g)=\tau_1(fg)+\tau_2(gf)
\]
for all $f,g\in \I$. Furthermore, the skew-symmetry of $\Phi$ shows
\[
\begin{aligned}
2\Phi(f,g)
&=\Phi(f,g)-\Phi(g,f)\\
&=(\tau_1-\tau_2)(fg-gf).
\end{aligned}
\]
In other words,
\begin{equation}\label{eq:antisymmetric-global-factor}
\Phi(f,g)=\frac{1}{2}(\tau_1-\tau_2)([f,g]), ~\text{for~all}~f,g\in\I.
\end{equation}
Taking $g=\one$ in \eqref{eq:antisymmetric-global-factor}, we have
$$A_0(f)-B_0(f)=\Phi(f,\one)=0$$
for all $f\in \I$, which means $A_0=B_0$. Set $L=A_0=B_0$. Then it is clear that $L(\one)=0$ and
\eqref{eq:antisymmetric-normalized-forms} hold.
Moreover, for all $f,g\in \I$,
\[
\begin{aligned}
\Phi(f,g)
&=L(f)g-L(g)f+fL(g)-gL(f)\\
&=[L(f),g]+[f,L(g)].
\end{aligned}
\]
Therefore, the desired conclusion \eqref{eq:L-commuting-identity}
follows from the equation \eqref{eq:antisymmetric-global-factor}.
\end{proof}

We now determine the form of linear map $L$ appearing in Lemma \ref{lem:antisymmetric-reduction}.

\begin{lemma}\label{lem:commuting-L-structure}
With notations as above, if $|X|>2$ and any two edges in $E(\Gamma(X))$ are contained in one cycle,
then there exist
$s\in R$, a derivation $D:\I\to\I$, and a center-valued map
$\Omega:\I\to Z(\I)$ such that
\begin{equation}\label{eq:L-structure}
L(f)=sf+D(f)+\Omega(f),~\text{for~all}~f\in\I.
\end{equation}
\end{lemma}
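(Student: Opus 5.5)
The plan is to determine $L$ on the standard basis $\{e_{xy}\mid x\le y\}$ using only commuting pairs of basis elements (and a few simple sums of them). We then peel off an inner derivation, a transitive derivation, a scalar multiple of the identity and a central part, and show that what remains is zero.

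\emph{Step 1 (diagonal idempotents).} Since $[e_x,e_y]=0$, \eqref{eq:L-commuting-identity} gives $[L(e_x),e_y]=[L(e_y),e_x]$ for all $x,y\in X$. We expand both sides with \eqref{eq:comm-ex} and compare coefficients at $e_{uv}$ with $u<v$. This should give $L(e_x)(u,v)=0$ unless $x\in\{u,v\}$, together with the compatibility $L(e_u)(u,v)=-L(e_v)(u,v)$. Setting $h=\sum_{u<v}L(e_u)(u,v)e_{uv}$, we get $[h,e_x]$ with the right off-diagonal part of $L(e_x)$, up to sign. Replacing $L$ by $L-\ad_h$ keeps \eqref{eq:L-commuting-identity}, because every derivation satisfies it; it also keeps $L(\one)=0$. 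So we may assume that every $L(e_x)$ is diagonal.

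\emph{Step 2 (off-diagonal units).} For $x<y$ and $z\notin\{x,y\}$ we have $[e_z,e_{xy}]=0$, hence $[L(e_{xy}),e_z]=[L(e_z),e_{xy}]=0$, the last equality because $L(e_z)$ is now diagonal and its $(x,x)$ and $(y,y)$ entries must vanish. Applying \eqref{eq:comm-ex} shows that $L(e_{xy})$ has no off-diagonal entries at positions involving $z$. Next we use the commuting pairs $e_{xy}$, $e_x+e_y+e_{xy}$ (or $e_{xy}$, $\one$, etc.) and $e_{xy}$, $e_{uv}$ with $y\neq u$, $v\neq x$. From these we aim to show
\[
L(e_{xy})=c_{xy}e_{xy}+d_{xy},\qquad d_{xy}\ \text{diagonal},
\]
and that the diagonal entries of $d_{xy}$ and of the $L(e_x)$ are tied to each other across comparable pairs.

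\emph{Step 3 (separating scalar, transitive and central parts).} Choose $s$ from one edge, for instance $s=L(e_x)(x,x)-L(e_x)(y,y)$ for a fixed edge, suitably normalised. Define $\sigma(x,y)=c_{xy}-s$ and check from the pairs $e_{xy}$, $e_{yz}$, $e_{xz}$ (all commuting relations inside a chain $x<y<z$) that $\sigma$ is transitive. Then $L-s\,\mathrm{id}-\Delta_\sigma$ is, by Lemma~\ref{lem:transitive-derivation} and since derivations satisfy \eqref{eq:L-commuting-identity}, again a map of the same type. Its values on off-diagonal units are diagonal, and its values on the $e_x$ are diagonal. It remains to prove that each such value is a multiple of $\one$, which gives $\Omega$ by Proposition~\ref{prop:center}. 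The derivation $D$ is $\ad_h+\Delta_\sigma$.

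\emph{Main obstacle.} The hard part is Step 3's last assertion: the local relations only show that a diagonal value $d=\sum_z d(z)e_z$ satisfies $d(u)=d(v)$ for certain edges $\{u,v\}$. The relations ``$d(u)=d(v)$ on edges'' attached to the value at a fixed basis element hold only for edges away from that element. Hence propagation must avoid one vertex (for $L(e_x)$) or one edge (for $L(e_{xy})$). This is exactly where the hypothesis enters via Lemma~\ref{lem:graph-equivalences}: $\Gamma(X)\setminus\{z\}$ is connected, and any two edges lie on a common cycle. I would first try to show, for $L(e_x)$, that its diagonal is constant on $X\setminus\{x\}$ by connectivity of $\Gamma(X)\setminus\{x\}$. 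Then $L(\one)=0$ together with $|X|>2$ should pin the remaining entry and absorb the discrepancy into $s$. For $L(e_{xy})$, I would use a cycle through $\{x,y\}$ and any other edge to carry the equalities around the removed edge. Consistency of the single constant $s$ across all edges would likewise be pushed along cycles using $\approx$ being one class.
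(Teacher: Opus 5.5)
\textbf{Genuine gap: Step 3 and the ``main obstacle''.} Steps 1 and 2 are sound, with one slip: $[e_z,e_{xy}]=0$ forces $L(e_z)(x,x)=L(e_z)(y,y)$, not that these entries vanish. Your use of $L(\one)=0$ to make $s_x=L(e_x)(x,x)-L(e_x)(y,y)$ independent of $x$ also works. Once $L(e_x)=s_xe_x+\eta_x\one$, the $(z,z)$-entry of $L(\one)=\sum_xL(e_x)$ is $s_z+\sum_x\eta_x=0$.

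The problem is that every commuting pair you name is a pair of basis elements, or involves $\one$ or $e_x+e_y$. Such pairs never see the product $e_{xy}e_{yz}=e_{xz}$.
\begin{itemize}
\item $e_{xy}$ and $e_{yz}$ do not commute. The commuting basis pairs inside a chain $x<y<z$, namely $(e_{xy},e_{xz})$ and $(e_{yz},e_{xz})$, only give equalities between diagonal entries. They never give a relation among $c_{xy},c_{yz},c_{xz}$, so transitivity of $\sigma$ is not obtained.
\item $(e_{xy},e_{uv})$ commute only when $u\neq y$ and $v\neq x$. So the equality $d_{xy}(u)=d_{xy}(v)$ is missing not only on the edge $\{x,y\}$. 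It is also missing on every edge $\{y,v\}$ with $y<v$ and every edge $\{u,x\}$ with $u<x$. ``Avoid one edge and go around a cycle'' therefore does not suffice.
\end{itemize}

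Concretely, take the chain $1<2<3$, which satisfies the hypothesis. Consider the map with $L(e_{12})=e_2$, resp.\ $L(e_{12})=e_{12}$, and $L=0$ on all other basis elements. Each satisfies every relation you list together with $L(\one)=0$, yet neither has the form $sf+D(f)+\Omega(f)$.
\begin{itemize}
\item In the first, the diagonal part of $L(e_{12})$ is not scalar. Vertex $2$ is isolated among your edge relations for $d_{12}$, and the only cycle through $\{1,2\}$ uses the uncovered edge $\{2,3\}$.
\item In the second, $s=0$ but $c_{13}=0\neq 1=c_{12}+c_{23}-s$.
\end{itemize}

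\textbf{The missing idea} is to use commuting pairs, in fact two-sided zero-product pairs, built by cancellation.
\begin{itemize}
\item For $x<y<v$, take $p=e_x+e_{xy}$ and $q=e_{yv}-e_{xv}$, so $pq=qp=0$. In $[L(p),q]+[p,L(q)]=0$, the coefficient of $e_{yv}$ gives $d_{xy}(y)=d_{xy}(v)$. The coefficient of $e_{xv}$, combined with $d_{xy}(x)=d_{xy}(v)$ from the basis pair $(e_{xy},e_{xv})$, gives $c_{xv}=c_{xy}+c_{yv}-s$, which is exactly the transitivity you need.
\item For $u<x<y$, the pair $p=e_y+e_{xy}$, $q=e_{ux}-e_{uy}$ gives $d_{xy}(u)=d_{xy}(x)$.
\end{itemize}
Only with these are the equalities available on all edges other than $\{x,y\}$. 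A single cycle through $\{x,y\}$ then closes the argument, as in the paper.
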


\begin{proof}
We determine the action of $L$ on the standard basis element $e_{xy}$ with $x\leq y$.
Let $x\neq y$. One has $[e_x,e_y]=0$ and the identity \eqref{eq:L-commuting-identity} gives
\begin{equation}\label{eq:L-ex-ey}
[L(e_x),e_y]+[e_x,L(e_y)]=0.
\end{equation}
Write $L(e_x)=\sum_{u\leq v}\ell_x(u,v)e_{uv}$.
By \eqref{eq:comm-ex},
\[
[L(e_x),e_y]
=\sum_{u<y}\ell_x(u,y)e_{uy}
-\sum_{y<v}\ell_x(y,v)e_{yv},
\]
whereas
\[
[e_x,L(e_y)]
=-\,[L(e_y),e_x]
=\sum_{x<v}\ell_y(x,v)e_{xv}
-\sum_{u<x}\ell_y(u,x)e_{ux}.
\]
Thus \eqref{eq:L-ex-ey} can be rewritten as
\begin{equation}\label{eq:L-ex-ey-expanded}
\begin{aligned}
0={}&
\sum_{u<y}\ell_x(u,y)e_{uy}
-\sum_{y<v}\ell_x(y,v)e_{yv}+\sum_{x<v}\ell_y(x,v)e_{xv}
-\sum_{u<x}\ell_y(u,x)e_{ux}.
\end{aligned}
\end{equation}
Considering the coefficient of $e_{uy}$ in \eqref{eq:L-ex-ey-expanded} leads to
\begin{equation}\label{eq:L-ex-away}
\ell_x(u,y)=0, ~\text{if}~x\neq u<y\neq x.
\end{equation}
Then we have
\begin{align}
L(e_x)&=\sum_{u< v}\ell_x(u,v)e_{uv}+\sum_{z\in X}\ell_x(z,z)e_{z}\nonumber\\
&=\sum_{u<x}\ell_x(u,x)e_{ux}+\sum_{x<v}\ell_x(x,v)e_{xv}+\sum_{z\in X}\ell_x(z,z)e_{z}.\label{eq:L-exx}
\end{align}

If we further assume $x<y$ in \eqref{eq:L-ex-ey}, then $\ell_x(x,y)+\ell_y(x,y)=0$ by
considering the coefficient of $e_{xy}$ in \eqref{eq:L-ex-ey-expanded}.
Define an element $h_L\in\mathcal I$ by $h_L(x,x)=0$, for all $x\in X$,
and $h_L(x,y)=\ell_y(x,y)=-\ell_x(x,y)$, for all $x<y\in X$.
By \eqref{eq:comm-ex}, we have
\[
\ad_{h_L}(e_x)=[h_L,e_x]=\sum_{u<x}\ell_x(u,x)e_{ux}+\sum_{x<v}\ell_x(x,v)e_{xv}.
\]
Let us define $L_1:=L-\ad_{h_L}$. It follows from equation \eqref{eq:L-exx} that
\begin{equation}\label{eq:L1-ex-diagonal}
L_1(e_x)=\sum_{z\in X}\ell_x(z,z)e_z
\end{equation}
for all $x\in X$. Moreover, it is clear that $L_1(\one)=0$ and
\begin{equation}\label{eq:L1-commuting}
[L_1(f),g]+[f,L_1(g)]=0,~\text{whenever}~[f,g]=0.
\end{equation}

For any $x<y$, if $z\notin\{x,y\}$, then $[e_{xy},e_z]=0$ and the identity \eqref{eq:L1-commuting} gives
\[
[L_1(e_{xy}),e_z]+[e_{xy},L_1(e_z)]=0.
\]
Write $L_1(e_{xy})=\sum_{u\leq v}L_1(e_{xy})(u,v)e_{uv}$. By \eqref{eq:comm-ex} and \eqref{eq:L1-ex-diagonal},
the above equality can be rewritten as
\begin{equation}\label{eq:L1-exy-ez-expanded}
0=\sum_{u<z}L_1(e_{xy})(u,z)e_{uz}
-\sum_{z<v}L_1(e_{xy})(z,v)e_{zv}+\bigl(\ell_z(y,y)-\ell_z(x,x)\bigr)e_{xy}.
\end{equation}
Therefore,
\begin{equation}\label{eq:gamma-away}
\ell_z(x,x)=\ell_z(y,y)~\text{for~all}~z\notin\{x,y\}.
\end{equation}
For a pair $u<v$ with $(u,v)\neq(x,y)$, then at least one of $u,v$ does not belong to the set $\{x,y\}$.
If $v\notin\{x,y\}$, taking $z=v$ in \eqref{eq:L1-exy-ez-expanded}, we get $L_1(e_{xy})(u,v)=0$.
If $v\in\{x,y\}$, then $u\notin\{x,y\}$. Taking $z=u$ in \eqref{eq:L1-exy-ez-expanded}, we also get $L_1(e_{xy})(u,v)=0$. Hence
\begin{align}
L_1(e_{xy})&=\sum_{u< v}L_1(e_{xy})(u,v)e_{uv}+\sum_{z\in X}L_1(e_{xy})(z,z)e_{z}\nonumber\\
&=\sum_{(u,v)\neq(x,y)}L_1(e_{xy})(u,v)e_{uv}+L_1(e_{xy})(x,y)e_{xy}+\sum_{z\in X}L_1(e_{xy})(z,z)e_{z}\nonumber\\
&=L_1(e_{xy})(x,y)e_{xy}+\sum_{z\in X}L_1(e_{xy})(z,z)e_{z}.\label{eq:L1-exy-preliminary}
\end{align}

For a given vertex $z\in X$, notice that $\Gamma(X)\setminus\{z\}$ is connected by Lemma~\ref{lem:graph-equivalences}.
Therefore, for any $u,v\in X\setminus\{z\}$, there is a path in $\Gamma(X)\setminus\{z\}$ from $u$ to $v$, i.e.,
there is a sequence of distinct vertices
\[
u=x_0,x_1,\ldots,x_m=v
\]
such that $\{x_{i-1},x_i\}$ is an edge of $\Gamma(X)\setminus\{z\}$ for every $1\leq i\leq m$.
Now equation \eqref{eq:gamma-away} shows that $\ell_z(u,u)=\ell_z(x_1,x_1)=\cdots=\ell_z(v,v)$.
Hence there exists $\eta_z\in R$ such that
\begin{equation}\label{eq:gamma-off-z}
\ell_z(u,u)=\eta_z,~\text{for~all}~u\neq z.
\end{equation}
It remains to compare the exceptional coefficients $\ell_z(z,z)$. For any $x<y$, set
$p=e_x+e_{xy}$ and $q=e_y-e_{xy}$. On one hand, from \eqref{eq:L1-ex-diagonal} and \eqref{eq:L1-exy-preliminary},
\[
L_1(p)=\sum_{z\in X}\ell_x(z,z)e_z+L_1(e_{xy})(x,y)e_{xy}+\sum_{z\in X}L_1(e_{xy})(z,z)e_z
\]
and
\[
L_1(q)=\sum_{z\in X}\ell_y(z,z)e_z-L_1(e_{xy})(x,y)e_{xy}-\sum_{z\in X}L_1(e_{xy})(z,z)e_z,
\]
which in turn deduces that
\[
[L_1(p),q](x,y)
=\bigl(\eta_x-\ell_x(x,x)\bigr)
+L_1(e_{xy})(x,y)-L_1(e_{xy})(x,x)+L_1(e_{xy})(y,y)
\]
and
\[
[p,L_1(q)](x,y)
=\bigl(\ell_y(y,y)-\eta_y\bigr)-L_1(e_{xy})(x,y)
+L_1(e_{xy})(x,x)-L_1(e_{xy})(y,y).
\]
On the other hand, since $pq=0=qp$, it follows from \eqref{eq:L1-commuting} that
$[L_1(p),q]+[p,L_1(q)]=0$. Therefore,
\begin{equation}\label{eq:gamma-endpoint-relation}
\ell_x(x,x)-\eta_x
=\ell_y(y,y)-\eta_y.
\end{equation}
Set $s_x=\ell_x(x,x)-\eta_x\in R$. Then \eqref{eq:gamma-endpoint-relation} means $s_x=s_y$ for all $x<y$ in $X$.
Since $\Gamma(X)$ is connected, these equalities propagate along every path. There is therefore an $s\in R$ such that
$s_x=s$, for all $x\in X$. Taking \eqref{eq:gamma-off-z} into account, we can rewrite \eqref{eq:L1-ex-diagonal} as
\begin{equation}\label{eq:L1-ex-final}
L_1(e_x)=se_x+\eta_x\one,~\text{for~all}~x\in X.
\end{equation}

For any $x<y$, we next study the coefficients $L_1(e_{xy})(z,z)$ appearing in \eqref{eq:L1-exy-preliminary}, and
claim that $L_1(e_{xy})(u,u)=L_1(e_{xy})(v,v)$ for all $u<v$. There are four cases:

{\bf Case 1.} $u\neq y$, $v\neq x$ and $(u,v)\neq(x,y)$.
Obviously $e_{xy}e_{uv}=e_{uv}e_{xy}=0$ and hence $[L_1(e_{xy}),e_{uv}]+[e_{xy},L_1(e_{uv})]=0$
by \eqref{eq:L1-commuting}. Left multiplication by $e_u$ and right multiplication by $e_v$ in the above equation leads to
\begin{equation}\label{eq:anti-beta-ordinary}
L_1(e_{xy})(u,u)=L_1(e_{xy})(v,v), ~\text{for~all}~ u\neq y,~ v\neq x,~ (u,v)\neq(x,y).
\end{equation}

{\bf Case 2.} $u=y$. This means $x<y<v$. Set $p=e_x+e_{xy}$ and $q=e_{yv}-e_{xv}$. Then $pq=qp=0$ and hence
$[L_1(p),q]+[p,L_1(q)]=0$ by \eqref{eq:L1-commuting}. Left multiplication by $e_y$ and right multiplication by $e_v$ in the above equation leads to
\begin{equation}\label{eq:anti-beta-outgoing}
L_1(e_{xy})(y,y)=L_1(e_{xy})(v,v), ~\text{for~all}~x<y<v.
\end{equation}

{\bf Case 3.} $v=x$. This means $u<x<y$. Set $p=e_y+e_{xy}$ and $q=e_{ux}-e_{uy}$. Then $pq=qp=0$ and hence
$[L_1(p),q]+[p,L_1(q)]=0$ by \eqref{eq:L1-commuting}.
Left multiplication by $e_u$ and right multiplication by $e_x$ in the above equation leads to
\begin{equation}\label{eq:anti-beta-incoming}
L_1(e_{xy})(u,u)=L_1(e_{xy})(x,x), ~\text{for~all}~u<x<y.
\end{equation}

{\bf Case 4.} $(u,v)=(x,y)$. Notice that, by equations \eqref{eq:anti-beta-ordinary}, \eqref{eq:anti-beta-outgoing} and
\eqref{eq:anti-beta-incoming}, we have in fact proved
$L_1(e_{xy})(u,u)=L_1(e_{xy})(v,v)$,
for all $u<v$ with $(u,v)\neq (x,y)$.
By the assumption, there is a cycle containing the edge $\{x,y\}$. In other words,
there exist distinct vertices $x_1,x_2,\cdots,x_n\in X\setminus \{x,y\}$ such that
$x\sim x_1$, $x_1\sim x_2$, $\cdots$, $x_n\sim y$. Therefore
\[
L_1(e_{xy})(x,x)=L_1(e_{xy})(x_1,x_1)=\cdots=L_1(e_{xy})(x_n,x_n)=L_1(e_{xy})(y,y).
\]
We complete the proof of the claim.

Since $\Gamma(X)$ is connected, we can say that the value $L_1(e_{xy})(z,z)$ is independent of $z$, i.e.,
$L_1(e_{xy})(u,u)=L_1(e_{xy})(v,v)$, for any $u,v\in X$. Therefore, there is a scalar $\eta_{xy}\in R$ such that
$\eta_{xy}=L_1(e_{xy})(z,z)$, for all $z\in X$. Then we can rewrite \eqref{eq:L1-exy-preliminary} as
\begin{equation}\label{eq:L1-exy-final}
L_1(e_{xy})=L_1(e_{xy})(x,y)e_{xy}+\eta_{xy}\one, ~\text{for~all}~x<y\in X.
\end{equation}

Up to now, we have determined the action of $L$ on the standard basis elements.
Next we construct a transitive derivation of $\I$. For any $x<y<z$, we set $p=e_x+e_{xy}$ and $q=e_{yz}-e_{xz}$.
Then $pq=qp=0$ and hence $[L_1(p),q]+[p,L_1(q)]=0$ by \eqref{eq:L1-commuting}.
Using \eqref{eq:L1-ex-final} and \eqref{eq:L1-exy-final}, by comparing the coefficients of
$e_{xz}$ in the above identity, we can obtain
\begin{equation}\label{eq:tran}
L_1(e_{xz})(x,z)=L_1(e_{xy})(x,y)+L_1(e_{yz})(y,z)-s.
\end{equation}
Define an element $\sigma\in \mathcal I$ by $\sigma(x,x)=0$, and
\begin{equation}\label{eq:tran2}
\sigma(x,y)=L_1(e_{xy})(x,y)-s
\end{equation}
for all $x<y\in X$. Then it follows from \eqref{eq:tran} and \eqref{eq:tran2} that
$\sigma(x,z)=\sigma(x,y)+\sigma(y,z)$, for all $x\leq y\leq z$.
By Lemma~\ref{lem:transitive-derivation}, the
$R$-linear map $\Delta_\sigma:\I\to\I$
defined by $\Delta_\sigma(e_{xy})=\sigma(x,y)e_{xy}$ is the transitive derivation induced by $\sigma$.

Let us define the $R$-linear map $\Omega:\I\to R\one$ by
$\Omega(e_x)=\eta_x\one$ and $\Omega(e_{xy})=\eta_{xy}\one$ for all $x<y$.
Since $X$ is connected, there is
$R\one=Z(\I)$ by Proposition~\ref{prop:center}. Therefore,
\eqref{eq:L1-ex-final} implies
\[
L_1(e_x)=se_x+\Delta_\sigma(e_x)+\Omega(e_x)
\]
and \eqref{eq:L1-exy-final} and \eqref{eq:tran2} implies
\[
L_1(e_{xy})=se_{xy}+\Delta_\sigma(e_{xy})+\Omega(e_{xy})
\]
for all $x<y$. We can get that
$L_1(f)=sf+\Delta_\sigma(f)+\Omega(f)$ and hence $L(f)=sf+D(f)+\Omega(f)$ for all $f\in \I$,
where $D=\ad_{h_L}+\Delta_\sigma$ is a derivation.
\end{proof}

We would like to remark that if $|X|>2$ and any two edges in $E(\Gamma(X))$ are contained in one cycle,
then Lemma \ref{lem:commuting-L-structure} has proved that an $R$-linear map of $\I$ which is Lie derivable at zero is a generalized Lie derivation.
Now we solve the skew-symmetric local FI \eqref{eq:antisymmetric-local}.

\begin{proposition}\label{thm:antisymmetric-standard-form}
Assume that $|X|>2$ and any two edges in $E(\Gamma(X))$ are contained in one cycle.
Then the $R$-linear maps $A,B:\I\to\I$ satisfy \eqref{eq:antisymmetric-local} if and only if there exist $C_1,C_2\in\I$, a derivation
$D:\I\to\I$, and a linear central-valued map
$\Omega:\I\to Z(\I)$ such that
\begin{equation}\label{eq:antisymmetric-standard-form}
A(f)=C_1f+D(f)+\Omega(f)~\text{and}~
B(f)=fC_2+D(f)+\Omega(f).
\end{equation}
\end{proposition}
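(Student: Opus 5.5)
The plan is to prove the two directions separately. The forward direction assembles Lemma~\ref{lem:antisymmetric-reduction} and Lemma~\ref{lem:commuting-L-structure}. The converse is a direct verification using the derivation property and centrality.

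\emph{Necessity.} Suppose $A,B$ satisfy \eqref{eq:antisymmetric-local}. By Lemma~\ref{lem:antisymmetric-reduction} there is an $R$-linear $L$ with $L(\one)=0$ such that
\[
A(f)=A(\one)f+L(f),\qquad B(f)=fB(\one)+L(f),
\]
and $L$ satisfies \eqref{eq:L-commuting-identity}. Since $|X|>2$ and any two edges of $\Gamma(X)$ lie on a common cycle, Lemma~\ref{lem:commuting-L-structure} gives $L(f)=sf+D(f)+\Omega(f)$ with $s\in R$, $D$ a derivation and $\Omega$ central-valued. Set $C_1=A(\one)+s\one$ and $C_2=B(\one)+s\one$. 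Because $sf=(s\one)f=f(s\one)$, the scalar part can be absorbed into either side. This yields \eqref{eq:antisymmetric-standard-form} with the same $D$ and $\Omega$ in both $A$ and $B$. The point to stress is that Lemma~\ref{lem:antisymmetric-reduction} forces the \emph{same} $L$ in both maps, which is why $D$ and $\Omega$ coincide.

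\emph{Sufficiency.} Suppose $A,B$ have the form \eqref{eq:antisymmetric-standard-form} and $fg=gf=0$. Substitute into $A(f)g-A(g)f+fB(g)-gB(f)$ and treat the three types of terms separately.
\begin{itemize}
\item[(i)] The $C$-terms give $C_1fg-C_1gf+fgC_2-gfC_2$, which vanishes.
\item[(ii)] The $\Omega$-terms give $\Omega(f)g-\Omega(g)f+f\Omega(g)-g\Omega(f)=[\Omega(f),g]+[f,\Omega(g)]=0$, since the values of $\Omega$ are central.
\item[(iii)] The $D$-terms give
\[
D(f)g-D(g)f+fD(g)-gD(f)=\bigl(D(f)g+fD(g)\bigr)-\bigl(D(g)f+gD(f)\bigr)=D(fg)-D(gf)=0.
\]
\end{itemize}
Hence \eqref{eq:antisymmetric-local} holds.

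There is no real obstacle here, since the heavy lifting was done in the two preceding lemmas. The only point requiring care is the bookkeeping of the scalar $s$ into $C_1$ and $C_2$. One should also note that the converse direction needs none of the hypotheses on $X$.
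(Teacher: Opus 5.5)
Your proof is correct and follows the paper's argument. For necessity you combine Lemma~\ref{lem:antisymmetric-reduction} with Lemma~\ref{lem:commuting-L-structure} and absorb the scalar $s$ into $C_1=A(\one)+s\one$ and $C_2=B(\one)+s\one$, and for sufficiency you carry out explicitly the direct computation that the paper leaves to the reader.
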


\begin{proof}
Assume that the $R$-linear maps $A,B:\I\to\I$ satisfy the local FI \eqref{eq:antisymmetric-local}.
It follows from Lemma~\ref{lem:antisymmetric-reduction} and Lemma~\ref{lem:commuting-L-structure} that
$A,B$ are of the desired form \eqref{eq:antisymmetric-standard-form},
where $C_1=A(\one)+s\one$ and $C_2=B(\one)+s\one$.
Conversely, if $A,B$ are of the form \eqref{eq:antisymmetric-standard-form},
a direct computation shows that they are solutions of the local FI \eqref{eq:antisymmetric-local}.
\end{proof}

Having treated the skew-symmetric local FI \eqref{eq:antisymmetric-local}, we now turn to its
symmetric counterpart.

\begin{lemma}\label{lem:symmetric-reduction}
Let $G,H: \I\to\I$ be $R$-linear maps such that
\begin{equation}\label{eq:symmetric-local}
G(f)g+fH(g)+G(g)f+gH(f)=0,~\text{whenever}~fg=gf=0.
\end{equation}
Define
$G_0(f):=G(f)-G(\one)f$ and $H_0(f):=H(f)-fH(\one)$,
and put
\begin{equation}\label{eq:delta-rho-definition}
\delta=G_0+H_0,
\quad
\rho=G_0-H_0.
\end{equation}
Then $\delta(\one)=\rho(\one)=0$ and for all $f,g\in\I$
\begin{equation}\label{eq:symmetric-master}
\begin{aligned}
\delta(f\circ g)=\delta(f)\circ g+f\circ\delta(g)+[\rho(f),g]+[\rho(g),f].
\end{aligned}
\end{equation}
\end{lemma}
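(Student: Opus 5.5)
The plan is to mirror the proof of Lemma~\ref{lem:antisymmetric-reduction}, this time exploiting the symmetric structure of the bilinear form. First, $\delta(\one)=\rho(\one)=0$ is immediate, since $G_0(\one)=G(\one)-G(\one)\one=0$ and similarly $H_0(\one)=0$. Next, substituting $G=G_0+G(\one)\cdot$ and $H=H_0+\cdot H(\one)$ into \eqref{eq:symmetric-local}, the extra terms are $G(\one)fg+fgH(\one)+G(\one)gf+gfH(\one)$, and these vanish when $fg=gf=0$. Hence the bilinear map
\[
\Phi(f,g):=G_0(f)g+fH_0(g)+G_0(g)f+gH_0(f)
\]
vanishes whenever $fg=gf=0$, and it is symmetric: $\Phi(f,g)=\Phi(g,f)$.

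By Lemma~\ref{lem:idempotent-span}, $\I$ is spanned by idempotents, so Lemma~\ref{lem:idempotent-bilinear} applies. Equation \eqref{eq:phi-jordan-unit} together with symmetry gives $2\Phi(f,g)=\Phi(f\circ g,\one)$. We compute
\[
\Phi(h,\one)=G_0(h)+hH_0(\one)+G_0(\one)h+H_0(h)=G_0(h)+H_0(h)=\delta(h).
\]
Therefore $2\Phi(f,g)=\delta(f\circ g)$ for all $f,g$. As an alternative route, Theorem~\ref{thm:incidence-2zpd} gives $\Phi(f,g)=\tau_1(fg)+\tau_2(gf)$; symmetrizing yields $2\Phi=(\tau_1+\tau_2)(f\circ g)$, and setting $g=\one$ identifies $\tau_1+\tau_2=\delta$. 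Either route works.

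It remains to rewrite $2\Phi(f,g)$ in terms of $\delta$ and $\rho$. Using $G_0=\frac12(\delta+\rho)$ and $H_0=\frac12(\delta-\rho)$, which is where $\frac12\in R$ is needed, we expand:
\[
2\Phi(f,g)=(\delta+\rho)(f)g+f(\delta-\rho)(g)+(\delta+\rho)(g)f+g(\delta-\rho)(f).
\]
Collecting the $\delta$ terms gives $\delta(f)g+g\delta(f)+f\delta(g)+\delta(g)f=\delta(f)\circ g+f\circ\delta(g)$. Collecting the $\rho$ terms gives $\rho(f)g-g\rho(f)+\rho(g)f-f\rho(g)=[\rho(f),g]+[\rho(g),f]$. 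Equating with $\delta(f\circ g)$ yields \eqref{eq:symmetric-master}.

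There is no real obstacle in this argument. The only points needing care are checking that the normalization preserves the local condition and the bookkeeping of signs in the final expansion.
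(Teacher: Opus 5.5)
Your proof is correct and follows the paper's argument. You normalize to $G_0,H_0$, form the symmetric bilinear map $\Phi$ that vanishes on two-sided zero products, and obtain $2\Phi(f,g)=\Phi(f\circ g,\one)=\delta(f\circ g)$. You then expand via $G_0=\frac12(\delta+\rho)$ and $H_0=\frac12(\delta-\rho)$; your alternative route through Theorem~\ref{thm:incidence-2zpd} is exactly the paper's. Your primary route, which quotes \eqref{eq:phi-jordan-unit} from Lemma~\ref{lem:idempotent-bilinear} directly, is a slight economy, since the symmetric case needs only that identity rather than the full $2$-zpd property.
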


\begin{proof}
The definitions give $G_0(\one)=H_0(\one)=0$, and hence $\delta(\one)=\rho(\one)=0$.
If $fg=gf=0$, then $G(\one)fg+fgH(\one)+G(\one)gf+gfH(\one)=0$, and we have
$$G_0(f)g+fH_0(g)+G_0(g)f+gH_0(f)=0.$$
Define the $R$-bilinear map $\Phi(f,g):=G_0(f)g+fH_0(g)+G_0(g)f+gH_0(f)$.
Then it is clear that $\Phi$ is symmetric and $\Phi(f,g)=0$ whenever $fg=gf=0$.
By Theorem~\ref{thm:incidence-2zpd}, the incidence algebra $\I$ is $2$-zpd, and hence there exist linear maps
$\tau_1,\tau_2:\I \to \I$ such that
\begin{align}
2\Phi(f,g)&=\Phi(f,g)+\Phi(g,f)\nonumber\\
&=(\tau_1+\tau_2)(fg+gf)\nonumber\\
&=\Phi(f\circ g,\one)\label{eq:symmetric-polarization}.
\end{align}
Since $G_0(\one)=H_0(\one)=0$, it follows from the definition of $\Phi$ and \eqref{eq:delta-rho-definition} that
\begin{equation}\label{eq:fg}
\Phi(f\circ g,\one)=G_0(f\circ g)+H_0(f\circ g)=\delta(f\circ g).
\end{equation}
Notice that $\frac{1}{2}\in R$ and $G_0=\frac{1}{2}(\delta+\rho)$,
$H_0=\frac{1}{2}(\delta-\rho)$. By the definition of $\Phi$, we get
\[
\begin{aligned}
2\Phi(f,g)
={}&\delta(f)g+\rho(f)g
+f\delta(g)-f\rho(g)\\
&+\delta(g)f+\rho(g)f
+g\delta(f)-g\rho(f)\\
={}&\delta(f)\circ g+f\circ\delta(g)
+[\rho(f),g]+[\rho(g),f].
\end{aligned}
\]
Combining this identity with \eqref{eq:symmetric-polarization} and \eqref{eq:fg}, we can prove the desired
\eqref{eq:symmetric-master}.
\end{proof}

We now determine the form of linear maps $\delta,\rho$ appearing in Lemma \ref{lem:symmetric-reduction}.

\begin{lemma}\label{lem:symmetric-master-structure}
With notations as above, if $|X|>2$ and any two edges in $E(\Gamma(X))$ are contained in one cycle,
then $\delta$ is a derivation and
there exist $t\in R$ and a center-valued map
$\Omega_0:\I\to Z(\I)$ such that
\begin{equation}\label{eq:rho-structure}
\rho(f)=tf+\Omega_0(f),~\text{for~all}~f\in\I.
\end{equation}
\end{lemma}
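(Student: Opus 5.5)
The plan is to extract from \eqref{eq:symmetric-master} a commuting-type condition on $\rho$ and a Jordan-derivation condition on $\delta$, and then pin down both on the basis $\{e_{xy}\}$, much as in Lemma~\ref{lem:commuting-L-structure}.

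\textbf{Step 1: $\rho(e)$ commutes with every idempotent $e$.} Put $f=g$ in \eqref{eq:symmetric-master} and divide by $2$ to get
\[
\delta(f^2)=\delta(f)\circ f+[\rho(f),f].
\]
Take $f=e$ idempotent. Multiplying by $e$ on the left and by $\one-e$ on the right makes the $\delta$-terms cancel and leaves $e\rho(e)(\one-e)=0$. Multiplying by $\one-e$ on the left and $e$ on the right gives $(\one-e)\rho(e)e=0$. Hence $[\rho(e),e]=0$ for every idempotent $e$. The same sandwiching gives $e\delta(e)e=0$ and $(\one-e)\delta(e)(\one-e)=0$.

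\textbf{Step 2: show $\rho(f)=tf+\Omega_0(f)$.} Apply Step~1 to $e_x$, $e_x+e_{xy}$ and $e_y+e_{xy}$, and use the idempotent $\one-e_x$. By \eqref{eq:comm-ex}, $\rho(e_x)(u,x)=0=\rho(e_x)(x,v)$ for $u<x<v$.

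Next apply \eqref{eq:symmetric-master} to pairs with $f\circ g=0$ and $fg=gf=0$, so that the left side vanishes:
\begin{itemize}
\item $(e_x,e_y)$ with $x\neq y$;
\item $(e_{xy},e_z)$ with $z\notin\{x,y\}$;
\item $(e_{xy},e_{uv})$ with zero products;
\item the pairs $p=e_x+e_{xy}$, $q=e_{yz}-e_{xz}$ and $p=e_x+e_{xy}$, $q=e_y-e_{xy}$ used before.
\end{itemize}
For each pair, sandwich by suitable $e_u(\cdot)e_v$, chosen so that the $\delta$-terms $\delta(f)\circ g+f\circ\delta(g)$ cancel or are controlled by Step~1. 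This yields relations on the coefficients of $\rho(e_x)$ and $\rho(e_{xy})$ of exactly the shape \eqref{eq:L-ex-away}, \eqref{eq:gamma-away}, \eqref{eq:anti-beta-ordinary}, \eqref{eq:anti-beta-outgoing} and \eqref{eq:anti-beta-incoming}.

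The propagation arguments then go through as before. Connectedness of $\Gamma(X)\setminus\{z\}$ (Lemma~\ref{lem:graph-equivalences}) makes the diagonal values $\rho(e_z)(u,u)$, $u\neq z$, equal to a constant $\eta_z$. The $p,q$ pair forces $\rho(e_x)(x,x)-\eta_x=t$ independent of $x$. The cycle through $\{x,y\}$ makes the diagonal of $\rho(e_{xy})$ constant, say $\eta_{xy}$.

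The new point compared with Lemma~\ref{lem:commuting-L-structure} concerns the off-diagonal coefficients. Because the sign in $[\rho(f),g]+[\rho(g),f]$ is the opposite one, the analogue of $h_L$ must vanish: comparing the $e_{xy}$-coefficient for the pair $(e_x,e_y)$ gives $\rho(e_x)(x,y)=\rho(e_y)(x,y)$, while Step~1 applied to $e_x+e_{xy}$ gives an opposite-sign relation. With $\tfrac12\in R$ both coefficients are $0$. Likewise $\rho(e_{xy})(x,y)=t$ should come out of the $p,q$ pairs (no transitive part survives). Setting $\Omega_0(e_x)=\eta_x\one$ and $\Omega_0(e_{xy})=\eta_{xy}\one$ then gives \eqref{eq:rho-structure}.

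\textbf{Step 3: $\delta$ is a derivation.} Once $\rho=t\,\mathrm{id}+\Omega_0$ with $\Omega_0$ central-valued, we have $[\rho(f),g]+[\rho(g),f]=t([f,g]+[g,f])=0$. So \eqref{eq:symmetric-master} says that $\delta$ is a Jordan derivation: $\delta(f\circ g)=\delta(f)\circ g+f\circ\delta(g)$.

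It remains to see that Jordan derivations of $\I$ are derivations when $\tfrac12\in R$. I would prove this directly on the basis. From Step~1 and $e_x\circ e_y=0$, show $\delta(e_x)$ has the form $[h,e_x]$ for a suitable $h$, exactly as with $h_L$. Subtract $\ad_h$. Then use $e_x\circ e_{xy}=e_{xy}$ and $e_{xy}\circ e_{yz}=e_{xz}$ to show $\delta-\ad_h$ acts diagonally on the $e_{xy}$ with a transitive $\sigma$. Conclude via Lemma~\ref{lem:transitive-derivation}.

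\textbf{Main obstacle.} I expect Step~2 to be the hardest part: $\rho$ cannot be isolated from $\delta$ in \eqref{eq:symmetric-master}, so each sandwich must be chosen so that the $\delta$-contributions provably cancel. The first thing I would try is to restrict to pairs $(f,g)$ with $f\circ g=0$ and to use the vanishing of $e\delta(e)e$ and $(\one-e)\delta(e)(\one-e)$ from Step~1 to kill the residual $\delta$-terms.
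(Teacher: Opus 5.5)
Your plan is sound, but it is organized differently from the paper. The paper never separates $\rho$ from $\delta$. Once $\delta(e_x)$ is seen to live on row and column $x$, it subtracts $\ad_{h_\delta}$ to get $\delta_1(e_x)=0$. It then substitutes basis pairs with \emph{nonzero} Jordan product, such as $(e_x,e_{xy})$, $(e_y,e_{xy})$, $(e_{xy},e_{yv})$, $(e_{ux},e_{xy})$, directly into \eqref{eq:symmetric-master}, so each coefficient comparison gives information on $\rho$ and $\delta_1$ at once. For example, $(e_x,e_{xy})$ and $(e_y,e_{xy})$ give $\rho(e_{xy})(x,y)=\rho(e_x)(x,x)-\rho(e_x)(y,y)=\rho(e_y)(y,y)-\rho(e_y)(x,x)$ together with $\delta_1(e_{xy})\in Re_{xy}$. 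Likewise $(e_{xy},e_{yv})$ yields both a diagonal relation for $\rho(e_{xy})$ and the transitivity of $\sigma$.

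You decouple the two maps instead. Your Step~1 identities $[\rho(e),e]=0$ and $e\delta(e)e=0=(\one-e)\delta(e)(\one-e)$ are correct. Applied to idempotents $e_x$, $e_x+e_y$, $e_x+e_{xy}$, $e_y+e_{xy}$ and extended linearly, they do kill every $\delta$-entry that survives your zero-product sandwiches; for instance, $\delta(e_{xy})$ has zero diagonal and $\delta(e_{xy})(u,v)=0$ for $u,v\notin\{x,y\}$. So $\rho$ can be determined first. Then $\delta$ is a Jordan derivation, and your basis argument for it is exactly the paper's $\delta_1$-computation with the $\rho$-terms absent. Your route isolates a reusable principle and makes the last step conceptually clean; the paper's early normalization of $\delta$ makes each individual comparison immediate.

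One step does not go through as in Lemma~\ref{lem:commuting-L-structure}, however. In \eqref{eq:symmetric-master} the $\rho(e_{xy})$-contributions of $[\rho(p),q]$ and $[\rho(q),p]$ add instead of cancelling. Put $a=\rho(e_{xy})(x,y)$, $d=\rho(e_{xy})(x,x)-\rho(e_{xy})(y,y)$ and $t_x=\rho(e_x)(x,x)-\eta_x$. The $(x,y)$-entry for $p=e_x+e_{xy}$, $q=e_y-e_{xy}$ then gives only $2a=t_x+t_y+2d$, not $t_x=t_y$. (Its $\delta$-part does vanish, since $\delta(p),\delta(q)$ have zero diagonal and $\delta(e_x+e_y)(x,y)=0$ by Step~1.)

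You must combine this with the $(x,y)$-entries of $[\rho(e),e]=0$ for $e=e_x+e_{xy}$ and $e=e_y+e_{xy}$, which read $a=t_x+d$ and $a=t_y-d$. Using $\tfrac12\in R$, these three relations give $d=0$ and $t_x=t_y=a$, hence $\rho(e_{xy})(x,y)=t$. Alternatively, take $d=0$ from your cycle argument and use the last two relations alone.

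Two smaller points. The $(x,y)$-entry of the $(e_x,e_y)$-relation contains $\delta(e_x)(x,y)+\delta(e_y)(x,y)$, so it gives $\rho(e_x)(x,y)=\rho(e_y)(x,y)$ only after applying Step~1 to $e_x+e_y$. This is redundant anyway, since Step~1 for $e_x$ and $e_y$ already makes both entries vanish. Also, for the incoming case you need the mirror pair $p=e_y+e_{xy}$, $q=e_{ux}-e_{uy}$, which is not on your list.
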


\begin{proof}
We first consider the action of $\delta$ and $\rho$ on $e_x$. As usual, we write $\delta(e_x)=\sum_{u\leq v}\delta(e_x)(u,v)e_{uv}$ and
$\rho(e_x)=\sum_{u\leq v} \rho(e_x)(u,v) e_{uv}$.
Taking $f=g=e_x$ in \eqref{eq:symmetric-master}, we have $2\delta(e_x)=2\delta(e_x)e_x+2e_x\delta(e_x)
+2[\rho(e_x),e_x]$ and hence
\begin{equation*}
\delta(e_x)
=\delta(e_x)e_x+e_x\delta(e_x)+[\rho(e_x),e_x],
\end{equation*}
since $\frac{1}{2}\in R$. By Lemma~\ref{lem:coefficient-formulas}, the above identity can be written as
\begin{equation}\label{eq:delta-ex-master-expanded}
\begin{aligned}
\sum_{u\leq v}\delta(e_x)(u,v)e_{uv}
={}&\sum_{u\leq x}\delta(e_x)(u,x)e_{ux}
+\sum_{x\leq v}\delta(e_x)(x,v)e_{xv}\\
&+\sum_{u<x}\rho(e_x)(u,x)e_{ux}
-\sum_{x<v}\rho(e_x)(x,v)e_{xv}.
\end{aligned}
\end{equation}
It follows from \eqref{eq:delta-ex-master-expanded} that
\begin{equation}\label{eq:delta-ex-away}
\delta(e_x)(u,v)=0, ~\text{if}~x\neq u\leq v\neq x.
\end{equation}
When we study the coefficients of $e_{x}$ in \eqref{eq:delta-ex-master-expanded}, there is
$\delta(e_x)(x,x)=\delta(e_x)(x,x)+\delta(e_x)(x,x)$, i.e., $\delta(e_x)(x,x)=0$. Combining this fact
with \eqref{eq:delta-ex-away}, we have
\begin{equation}\label{eq:delta-ex-diagonal-zero}
\delta(e_x)(z,z)=0,~\text{for~all}~z\in X.
\end{equation}
Thus \eqref{eq:delta-ex-away} and \eqref{eq:delta-ex-diagonal-zero} implies that
\begin{align}
\delta(e_x)=&\sum_{u\leq v}\delta(e_x)(u,v)e_{uv}\nonumber\\
=&\sum_{u<x}\delta(e_x)(u,x)e_{ux}+\sum_{x<v}\delta(e_x)(x,v)e_{xv}\nonumber\\
&+\sum_{u<v,~u,v\neq x}\delta(e_x)(u,v)e_{uv}+\sum_{z\in X}\delta(e_x)(z,z)e_{z}\nonumber\\
=&\sum_{u<x}\delta(e_x)(u,x)e_{ux}
+\sum_{x<v}\delta(e_x)(x,v)e_{xv}.\label{eq:delta-ex0}
\end{align}
For any $u<x$, considering the coefficients of $e_{ux}$ in \eqref{eq:delta-ex-master-expanded}, we have
$\delta(e_x)(u,x)=\delta(e_x)(u,x)+\rho(e_x)(u,x)$,
and hence
\begin{equation}\label{eq:rho-ex-incoming-zero}
\rho(e_x)(u,x)=0,~\text{for~all}~u<x.
\end{equation}
Similarly,
\begin{equation}\label{eq:rho-ex-outgoing-zero}
\rho(e_x)(x,v)=0,~\text{for~all}~x<v.
\end{equation}
For any $v\neq x$, since $e_x\circ e_v=0$, it follows from the identity \eqref{eq:symmetric-master} that
$$
0=\delta(e_x)\circ e_v+e_x\circ\delta(e_v)+[\rho(e_x),e_v]+[\rho(e_v),e_x].
$$
Left multiplication by $e_u$ and right multiplication by $e_v$ in the above equation leads to
\begin{equation}\label{eq:rho-ex-away-zero}
\rho(e_x)(u,v)=0,~\text{if}~x\neq u<v\neq x.
\end{equation}
Therefore, we have from \eqref{eq:rho-ex-incoming-zero}, \eqref{eq:rho-ex-outgoing-zero} and \eqref{eq:rho-ex-away-zero} that
\begin{align}
\rho(e_x)=&\sum_{u\leq v}\rho(e_x)(u,v)e_{uv}\nonumber\\
=&\sum_{u<x}\rho(e_x)(u,x)e_{ux}+\sum_{x<v}\rho(e_x)(x,v)e_{xv}\nonumber\\
&+\sum_{u<v,~u,v\neq x}\rho(e_x)(u,v)e_{uv}+\sum_{z\in X}\rho(e_x)(z,z)e_{z}\nonumber\\
=&\sum_{z\in X}\rho(e_x)(z,z)e_{z}.\label{eq:rho-ex-diagonal}
\end{align}

For any $x<y$, taking $f=e_x$ and $y=e_y$ in \eqref{eq:symmetric-master}, we get
$0=\delta(e_x)\circ e_y+e_x\circ\delta(e_y)$, by \eqref{eq:rho-ex-diagonal}.
Left multiplication by $e_x$ and right multiplication by $e_y$ in the above equation leads to
$\delta(e_x)(x,y)+\delta(e_y)(x,y)=0$.
Define an element $h_\delta\in\I$ by $h_\delta(x,x)=0$, for all $x\in X$,
and $h_\delta(x,y)=\delta(e_y)(x,y)=-\delta(e_x)(x,y)$, for all $x<y$. By \eqref{eq:comm-ex}, we have
\[
\ad_{h_\delta}(e_x)=[h_\delta, e_x]=\sum_{u<x}\delta(e_x)(u,x)e_{ux}
+\sum_{x<v}\delta(e_x)(x,v)e_{xv}.
\]
Let us define $\delta_1:=\delta-\ad_{h_\delta}$. It follows from equation \eqref{eq:delta-ex0} that
\begin{equation}\label{eq:delta1-ex-zero}
\delta_1(e_x)=0,~\text{for~all}~x\in X.
\end{equation}
Moreover, due to \eqref{eq:symmetric-master}, it is clear that,
\begin{equation}\label{eq:symmetric-master-delta1}
\delta_1(f\circ g)=\delta_1(f)\circ g+f\circ\delta_1(g)+[\rho(f),g]+[\rho(g),f]
\end{equation}
for all $f,g\in \I$.

We next consider the action of $\delta_1$ and $\rho$ on $e_{xy}$. As usual, write
$\delta_1(e_{xy})=\sum_{u\leq v}\delta_1(e_{xy})(u,v)e_{uv}$ and
$\rho(e_{xy})=\sum_{u\leq v}\rho(e_{xy})(u,v)e_{uv}$ for all $x<y$.
Taking $f=e_x$ and $g=e_{xy}$ in \eqref{eq:symmetric-master-delta1}, we have by \eqref{eq:delta1-ex-zero} that
\begin{align}
\delta_1(e_{xy})=& e_x\circ\delta_1(e_{xy})+[\rho(e_x),e_{xy}]+[\rho(e_{xy}),e_x]\nonumber\\
=& 2\delta_1(e_{xy})(x,x)e_x+\sum_{u<x}\delta_1(e_{xy})(u,x)e_{ux}+\sum_{x<v}\delta_1(e_{xy})(x,v)e_{xv}\nonumber\\
&+\bigl(\rho(e_x)(x,x)-\rho(e_x)(y,y)\bigr)e_{xy}\nonumber\\
&+\sum_{u<x}\rho(e_{xy})(u,x)e_{ux}-\sum_{x<v}\rho(e_{xy})(x,v)e_{xv},\label{eq:x-xy-master}
\end{align}
where the second equality depends on Lemma \ref{lem:coefficient-formulas} and \eqref{eq:rho-ex-diagonal}.
Comparing the coefficients of $e_{uv}$, $e_{x}$, $e_{ux}$, $e_{xv}$ and $e_{xy}$ respectively in both sides of \eqref{eq:x-xy-master}
we obtain
\begin{align}
\delta_1(e_{xy})(u,v)&=0,\quad \text{if}~x\neq u\leq v\neq x,\label{eq:b-support-x}\\
\delta_1(e_{xy})(x,x)&=0,\quad  \label{eq:b-xx-zero}\\
\rho(e_{xy})(u,x)&=0,\quad \text{if}~u<x,\label{eq:ccccc1} \\
\rho(e_{xy})(x,v)&=0,\quad \text{if}~x<v~\text{and}~v\neq y,\label{eq:c-ux-zero}\\
\rho(e_{xy})(x,y)&=\rho(e_x)(x,x)-\rho(e_x)(y,y),\quad  \text{if}~x<y.
\label{eq:c-xy-alpha-x}
\end{align}
Taking $f=e_y$ and $g=e_{xy}$ in \eqref{eq:symmetric-master-delta1}, we have
\[
\delta_1(e_{xy})
=e_y\circ\delta_1(e_{xy})
+[\rho(e_y),e_{xy}]
+[\rho(e_{xy}),e_y],
\]
and similarly,
\begin{align}
\delta_1(e_{xy})(u,v)&=0,\quad \text{if}~y\neq u\leq v\neq y,\label{eq:b-support-y}\\
\delta_1(e_{xy})(y,y)&=0,\quad \label{eq:b-yy-zero}\\
\rho(e_{xy})(u,y)&=0,\quad \text{if}~u<y~\text{and}~u\neq x,\label{eq:c-yv-zero}\\
\rho(e_{xy})(y,v)&=0,\quad \text{if}~y<v,\notag \\
\rho(e_{xy})(x,y)&=\rho(e_y)(y,y)-\rho(e_y)(x,x),\quad\text{if}~x<y.
\label{eq:c-xy-alpha-y}
\end{align}
Therefore,
\begin{align}
\delta_1(e_{xy})=&\sum_{u\in X}\delta_1(e_{xy})(u,u)e_{uu}+\sum_{u<v}\delta_1(e_{xy})(u,v)e_{uv}\nonumber\\
=&\sum_{x< v}\delta_1(e_{xy})(x,v)e_{xv}+\sum_{u< x}\delta_1(e_{xy})(u,x)e_{ux}\nonumber\\
=&\delta_1(e_{xy})(x,y)e_{xy},\label{eq:delta1-exy-form}
\end{align}
where the second equality follows from \eqref{eq:b-support-x} and \eqref{eq:b-xx-zero},
and the third equality follows from \eqref{eq:b-support-y}.
Let $z\in X$ such that $z\neq x$ and $z\neq y$. Taking $f=e_z$ and $g=e_{xy}$ in \eqref{eq:symmetric-master-delta1},
by \eqref{eq:delta1-ex-zero} and \eqref{eq:delta1-exy-form}, we have
\[
[\rho(e_{z}),e_{xy}]+[\rho(e_{xy}),e_z]=0,
\]
which in turn implies, by \eqref{eq:rho-ex-diagonal},
\begin{equation}\label{eq:z-xy-rho}
[\rho(e_{xy}),e_z]
+\bigl(\rho(e_{z})(x,x)-\rho(e_{z})(y,y)\bigr)e_{xy}=0.
\end{equation}
Left multiplication by $e_x$ and right multiplication by $e_y$ in \eqref{eq:z-xy-rho} leads to
\begin{equation}\label{eq:alpha-z-edge}
\rho(e_{z})(x,x)=\rho(e_{z})(y,y),~\text{if}~z\neq x<y\neq z.
\end{equation}
Combining \eqref{eq:z-xy-rho} and \eqref{eq:alpha-z-edge}, we get
$\rho(e_{xy})(u,z)=0$, for all $u<z$ with $z\notin\{x,y\}$. Taking \eqref{eq:ccccc1} into account,
the above identity can be strengthened as
\begin{equation}\label{eq:c-uz-zero}
\rho(e_{xy})(u,v)=0,~\text{if}~u<v\neq y.
\end{equation}
Therefore, it follows from \eqref{eq:c-ux-zero}, \eqref{eq:c-yv-zero} and \eqref{eq:c-uz-zero} that
\begin{align}
\rho(e_{xy})=&\sum_{z\in X}\rho(e_{xy})(z,z)e_{z}+\sum_{u< v}\rho(e_{xy})(u,v)e_{uv}\nonumber\\
=&\sum_{z\in X}\rho(e_{xy})(z,z)e_{z}+\sum_{x\neq u<v\neq y}\rho(e_{xy})(u,v)e_{uv}+\rho(e_{xy})(x,y)e_{xy}\nonumber\\
&+\sum_{x<v\neq y}\rho(e_{xy})(x,v)e_{xv}+\sum_{x\neq u<y}\rho(e_{xy})(u,y)e_{uy}
\nonumber\\
=&\rho(e_{xy})(x,y)e_{xy}+\sum_{z\in X}\rho(e_{xy})(z,z)e_{z}.\label{00}
\end{align}

For a given vertex $z\in X$, notice that $\Gamma(X)\setminus\{z\}$ is connected by Lemma~\ref{lem:graph-equivalences}.
Therefore, for any $u,v\in X\setminus\{z\}$, there is a path in $\Gamma(X)\setminus\{z\}$ from $u$ to $v$.
Then the identity \eqref{eq:alpha-z-edge} can propagate along this path, i.e.,
$\rho(e_{z})(u,u)=\rho(e_{z})(v,v)$ for all $u,v\neq z$. Hence there exists a $\mu_z\in R$ such that
\begin{equation}\label{eq:alpha-off-z}
\rho(e_{z})(u,u)=\mu_z,~\text{for~all}~u\neq z.
\end{equation}
It remains to study the exceptional coefficients $\rho(e_{z})(z,z)$.
Set $t_z=\rho(e_z)(z,z)-\mu_z$, for all $z\in X$. For any $x<y$, combining \eqref{eq:c-xy-alpha-x}, \eqref{eq:c-xy-alpha-y}
and \eqref{eq:alpha-off-z}, we have $t_x=\rho(e_x)(x,x)-\rho(e_x)(y,y)=\rho(e_{xy})(x,y)=t_y$.
Since $\Gamma(X)$ is connected, we can say that the value $t_z$ is independent of $z$, i.e.,
$t_x=t_y$ for all $x,y\in X$. Therefore, there exists a $t\in R$ such that $t_x=t$, for all $x\in X$.
Taking \eqref{eq:alpha-off-z} into account, we can rewrite \eqref{eq:rho-ex-diagonal} as
\begin{equation}\label{eq:rho-ex-final}
\rho(e_x)=te_x+\mu_x\one, ~\text{for~all}~x\in X.
\end{equation}
Furthermore, we have by \eqref{00} that
\begin{equation}\label{eq:rho-exy-with-beta}
\rho(e_{xy})=te_{xy}+\sum_{z\in X}\rho(e_{xy})(z,z)e_z,
\end{equation}
for all $x<y$, where $t=\rho(e_{xy})(x,y)$.

For any $x<y$, we next study the coefficients $\rho(e_{xy})(z,z)$ appearing in \eqref{eq:rho-exy-with-beta},
and claim that $\rho(e_{xy})(u,u)=\rho(e_{xy})(v,v)$ for all $u<v$. There are four cases:

{\bf Case 1.} $u\neq y$, $v\neq x$ and $(u,v)\neq (x,y)$.
Obviously $e_{xy}\circ e_{uv}=0$ and hence $\delta_1(e_{xy})\circ e_{uv}+e_{xy}\circ\delta_1(e_{uv})+[\rho(e_{xy}),e_{uv}]+[\rho(e_{uv}),e_{xy}]=0$
by \eqref{eq:symmetric-master-delta1}.
Using \eqref{eq:delta1-exy-form} and \eqref{eq:rho-exy-with-beta}, a direct computation shows
\begin{equation}\label{eq:beta-ordinary}
\rho(e_{xy})(u,u)=\rho(e_{xy})(v,v), ~\text{for~all}~ u\neq y,~ v\neq x,~ (u,v)\neq(x,y).
\end{equation}

{\bf Case 2.} $u=y$. This means $x<y<v$. Taking $f=e_{xy}$ and $g=e_{yv}$ in \eqref{eq:symmetric-master-delta1}, we have
$$\delta_1(e_{xv})=\delta_1(e_{xy})\circ e_{yv}+e_{xy}\circ\delta_1(e_{yv})+[\rho(e_{xy}),e_{yv}]+[\rho(e_{yv}),e_{xy}].$$
Using \eqref{eq:delta1-exy-form} and \eqref{eq:rho-exy-with-beta} again, we get
\begin{equation*}
\begin{aligned}
\delta_1(e_{xv})(x,v)e_{xv}
={}&(\delta_1(e_{xy})(x,y)+\delta_1(e_{yv})(y,v))e_{xv}
\\
&+\bigl(\rho(e_{xy})(y,y)-\rho(e_{xy})(v,v)\bigr)e_{yv}\\
&+\bigl(\rho(e_{yv})(x,x)-\rho(e_{yv}(y,y))\bigr)e_{xy},
\end{aligned}
\end{equation*}
which in turn shows
\begin{align}
\rho(e_{xy})(y,y)&=\rho(e_{xy})(v,v), ~\text{for~all}~x<y<v,
\label{eq:beta-outgoing}\\
\delta_1(e_{xv})(x,v)&=\delta_1(e_{xy})(x,y)+\delta_1(e_{yv})(y,v), ~\text{for~all}~x<y<v.
\label{eq:lambda-right}
\end{align}

{\bf Case 3.} $v=x$. This means $u<x<y$. Taking $f=e_{ux}$ and $g=e_{xy}$ in \eqref{eq:symmetric-master-delta1},
similar to Case 2, we can get
\begin{align}
\rho(e_{xy})(u,u)&=\rho(e_{xy})(x,x), ~\text{for~all}~u<x<y. \label{eq:beta-incoming}
\end{align}

{\bf Case 4.} $(u,v)=(x,y)$. Notice that, by equations \eqref{eq:beta-ordinary}, \eqref{eq:beta-outgoing} and \eqref{eq:beta-incoming},
we have in fact proved $\rho(e_{xy})(u,u)=\rho(e_{xy})(v,v)$, for all $u<v$ with $(u,v)\neq(x,y)$.
By the assumption, there is a cycle containing the edge $\{x,y\}$.
In other words, there exist distinct vertices $x_1,x_2,\cdots,x_n\in X\setminus \{x,y\}$ such that
$x\sim x_1$, $x_1\sim x_2$, $\cdots$, $x_n\sim y$. Therefore
\[
\rho(e_{xy})(x,x)=\rho(e_{xy})(x_1,x_1)=\cdots=\rho(e_{xy})(x_n,x_n)=\rho(e_{xy})(y,y).
\]
We complete the proof of the claim.

Since $\Gamma(X)$ is connected, we can say that the value $\rho(e_{xy})(z,z)$ is independent of $z$, i.e.,
$\rho(e_{xy})(u,u)=\rho(e_{xy})(v,v)$, for any $u,v\in X$. Therefore, there is a scalar $\mu_{xy}\in R$ such that
$\mu_{xy}=\rho(e_{xy})(z,z)$, for all $z\in X$. Then we can rewrite \eqref{eq:rho-exy-with-beta} as
\begin{equation}\label{eq:rho-exy-final}
\rho(e_{xy})=te_{xy}+\mu_{xy}\one.
\end{equation}

Up to now, we have determined the action of $\delta$ and $\rho$ on the standard basis elements.
Finally, we prove the desired results. Define an element $\sigma \in \I$ by $\sigma(x,x)=0$ for all $x\in X$, and
$\sigma(x,y)=\delta_1(e_{xy})(x,y)$ for all $x<y\in X$.
Then it follows from \eqref{eq:lambda-right} that
$\sigma(x,z)=\sigma(x,y)+\sigma(y,z)$, for all $x\leq y\leq z$.
Taking \eqref{eq:delta1-ex-zero} and \eqref{eq:delta1-exy-form} into account, we have
$\delta_1(e_{xy})=\sigma(x,y)e_{xy}$, for all $x\leq y\in X$. Therefore, by Lemma~\ref{lem:transitive-derivation},
$\delta_1$ is the transitive derivation induced by $\sigma$ and hence $\delta=\ad_{h_\delta}+\delta_1$ is a derivation.

Let us define the $R$-linear map $\Omega_0:\I\to R\one$ by
$\Omega_0(e_x)=\mu_x\one$ and $\Omega_0(e_{xy})=\mu_{xy}\one$ for all $x<y$.
Since $X$ is connected, there is $R\one=Z(\I)$ by Proposition \ref{prop:center}.
It follows from \eqref{eq:rho-ex-final} and \eqref{eq:rho-exy-final} that
$\rho(f)=tf+\Omega_0(f)$ for all $f\in\I$.
\end{proof}

\begin{proposition}\label{thm:symmetric-standard-form}
Assume that $|X|>2$ and any two edges in $E(\Gamma(X))$ are contained in one cycle.
The the $R$-linear maps $G,H:\I\to\I$ satisfy \eqref{eq:symmetric-local} if and only if
there exist $C_1,C_2\in\I$, a derivation
$D:\I\to\I$, and a linear central-valued map
$\Omega:\I\to Z(\I)$ such that
\begin{equation}\label{eq:symmetric-standard-form}
G(f)=C_1f+D(f)+\Omega(f)~\text{and}~
H(f)=fC_2+D(f)-\Omega(f).
\end{equation}
\end{proposition}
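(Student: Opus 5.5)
The plan mirrors the proof of Proposition~\ref{thm:antisymmetric-standard-form}: the forward direction comes from Lemmas~\ref{lem:symmetric-reduction} and~\ref{lem:symmetric-master-structure}, and the converse is a direct verification.

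\emph{Forward direction.} Suppose $G,H$ satisfy \eqref{eq:symmetric-local}, and let $G_0,H_0,\delta,\rho$ be as in Lemma~\ref{lem:symmetric-reduction}. By Lemma~\ref{lem:symmetric-master-structure}, $\delta$ is a derivation and $\rho(f)=tf+\Omega_0(f)$ with $\Omega_0$ central-valued. Since $\frac12\in R$, we have
\[
G_0=\tfrac12(\delta+\rho),\qquad H_0=\tfrac12(\delta-\rho).
\]
Put $D=\frac12\delta$, which is again a derivation, and $\Omega=\frac12\Omega_0$, which is linear and central-valued. Then
\[
G(f)=G(\one)f+\tfrac t2 f+D(f)+\Omega(f),\qquad H(f)=fH(\one)-\tfrac t2 f+D(f)-\Omega(f).
\]
So we may take $C_1=G(\one)+\frac t2\one$ and $C_2=H(\one)-\frac t2\one$. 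Here the scalar term $\frac t2 f$ is absorbed into $C_1f$, and $-\frac t2 f$ into $fC_2$. This gives exactly \eqref{eq:symmetric-standard-form}.

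\emph{Converse.} Assume $G,H$ have the form \eqref{eq:symmetric-standard-form} and that $fg=gf=0$. We substitute into the left side of \eqref{eq:symmetric-local} and treat each type of term separately.
\begin{enumerate}
\item[(i)] The terms from $C_1$ and $C_2$ are $C_1fg+fgC_2+C_1gf+gfC_2$, and each summand vanishes.
\item[(ii)] The derivation terms are $D(f)g+fD(g)+D(g)f+gD(f)=D(fg)+D(gf)=0$.
\item[(iii)] The central terms are $\Omega(f)g-f\Omega(g)+\Omega(g)f-g\Omega(f)$. Because $\Omega(f)$ and $\Omega(g)$ commute with everything, these cancel in pairs: $\Omega(f)g-g\Omega(f)=0$ and $\Omega(g)f-f\Omega(g)=0$.
\end{enumerate}
Hence \eqref{eq:symmetric-local} holds.

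The only point that needs care is the bookkeeping of signs. In particular, $\Omega$ must enter $H$ with a minus sign for the central terms to cancel, which is consistent with $H_0=\frac12(\delta-\rho)$. There is no genuine obstacle, since all the substantive work was done in Lemma~\ref{lem:symmetric-master-structure}.
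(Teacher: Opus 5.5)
Your proposal is correct and matches the paper's proof. You make the same choices $D=\frac12\delta$, $\Omega=\frac12\Omega_0$, $C_1=G(\one)+\frac t2\one$, $C_2=H(\one)-\frac t2\one$ from Lemmas~\ref{lem:symmetric-reduction} and~\ref{lem:symmetric-master-structure}, and your term-by-term check of the converse correctly spells out the ``direct computation'' the paper leaves to the reader.
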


\begin{proof}
Assume that the $R$-linear maps $G,H:\I\to\I$ satisfy the local FI \eqref{eq:symmetric-local}.
It follows from Lemma~\ref{lem:symmetric-reduction} and Lemma~\ref{lem:symmetric-master-structure} that
\[
\begin{aligned}
G_0(f)
&=\frac12\delta(f)+\frac t2f+\frac12\Omega_0(f),\\
H_0(f)
&=\frac12\delta(f)-\frac t2f-\frac12\Omega_0(f),
\end{aligned}
\]
for all $f\in \I$. Set $D=\frac{1}{2}\delta$, $\Omega=\frac{1}{2}\Omega_0$,
$C_1=G(\one)+\frac{t}{2}\one$ and $C_2=H(\one)-\frac{t}{2}\one$.
Then $G,H$ are of the desired form \eqref{eq:symmetric-standard-form}.
Conversely, if $G,H$ are of the form \eqref{eq:symmetric-standard-form},
a direct computation shows that they are solutions of the local FI \eqref{eq:symmetric-local}.
\end{proof}

Now we are ready to prove the main theorem.

\begin{theorem}\label{thm:main-characterization}
Let $R$ be a commutative ring with unity such that $\frac{1}{2}\in R$.
Let $X$ be a connected finite poset with $|X|>2$ and set $\I=I(X,R)$.
The following statements are equivalent:
\begin{enumerate}
\item[(i)] Any two edges in $E(\Gamma(X))$ are contained in one cycle.
\item[(ii)] Let $F_1,F_2,F_3,F_4:\I\to\I$ be $R$-linear maps satisfying
\begin{equation}\label{eq:four-map-local}
F_1(f)g+fF_2(g)+F_3(g)f+gF_4(f)=0,
\end{equation}
whenever $fg=gf=0$. Then there exist
$C_1,C_2,C_3,C_4\in\I$, derivations $D_1,D_2:\I\to\I$, and linear
central-valued maps $\Omega_1,\Omega_2:\I\to Z(\I)$ such that
\begin{equation}\label{eq:four-map-standard}
\begin{aligned}
F_1(f)&=C_1f+D_1(f)+\Omega_1(f),\\
F_2(f)&=fC_2+D_1(f)+\Omega_2(f),\\
F_3(f)&=C_3f+D_2(f)-\Omega_2(f),\\
F_4(f)&=fC_4+D_2(f)-\Omega_1(f),
\end{aligned}
\end{equation}
for all $f\in\I$.
\end{enumerate}
\end{theorem}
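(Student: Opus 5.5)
For (i)$\Rightarrow$(ii), I will split the four-map identity into its symmetric and skew-symmetric parts and apply Propositions~\ref{thm:antisymmetric-standard-form} and \ref{thm:symmetric-standard-form}. Write $\varphi(f,g)=F_1(f)g+fF_2(g)+F_3(g)f+gF_4(f)$. Since the condition $fg=gf=0$ is symmetric in $f,g$, both $\varphi(f,g)$ and $\varphi(g,f)$ vanish whenever $fg=gf=0$. Because $\frac12\in R$, I set
\[
G=\tfrac12(F_1+F_3),\quad H=\tfrac12(F_2+F_4),\quad A=\tfrac12(F_1-F_3),\quad B=\tfrac12(F_2-F_4).
\]
A direct regrouping then gives
\[
\tfrac12\bigl(\varphi(f,g)+\varphi(g,f)\bigr)=G(f)g+fH(g)+G(g)f+gH(f)
\]
and
\[
\tfrac12\bigl(\varphi(f,g)-\varphi(g,f)\bigr)=A(f)g-A(g)f+fB(g)-gB(f),
\]
and both expressions vanish whenever $fg=gf=0$. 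So $(G,H)$ satisfies \eqref{eq:symmetric-local} and $(A,B)$ satisfies \eqref{eq:antisymmetric-local}.

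Proposition~\ref{thm:symmetric-standard-form} yields $G(f)=C'_1f+D'(f)+\Omega'(f)$ and $H(f)=fC'_2+D'(f)-\Omega'(f)$. Proposition~\ref{thm:antisymmetric-standard-form} yields $A(f)=C''_1f+D''(f)+\Omega''(f)$ and $B(f)=fC''_2+D''(f)+\Omega''(f)$. Using $F_1=G+A$, $F_3=G-A$, $F_2=H+B$ and $F_4=H-B$, I obtain \eqref{eq:four-map-standard} with
\[
D_1=D'+D'',\quad D_2=D'-D'',\quad \Omega_1=\Omega'+\Omega'',\quad \Omega_2=\Omega''-\Omega',
\]
and $C_1=C'_1+C''_1$, $C_3=C'_1-C''_1$, $C_2=C'_2+C''_2$, $C_4=C'_2-C''_2$. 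The main care needed is the sign bookkeeping: $F_2$ must carry $D_1$ and $F_4$ must carry $-\Omega_1$. Both follow from the recorded signs, for instance $F_4=H-B$ has central part $-\Omega'-\Omega''=-\Omega_1$.

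For (ii)$\Rightarrow$(i), I argue by contrapositive. Suppose (i) fails. By Lemma~\ref{lem:graph-equivalences}, $\Gamma(X)$ has a cut vertex $z$. Let $Y_1$ be one component of $\Gamma(X)\setminus\{z\}$ and $Y_2$ the union of the others. Define $F(e_{uv})=e_{uv}$ if $u,v\in Y_1\cup\{z\}$ with $u\ne v$, and $F=0$ on all other basis elements, including the diagonal idempotents. The idea is that elements supported over $Y_1\cup\{z\}$ and over $Y_2\cup\{z\}$ only interact through $e_z$.

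Now take $F_1=F_2=F$ and $F_3=F_4=0$, so that the identity reads $F(f)g+fF(g)=0$ whenever $fg=gf=0$. To verify it, I would decompose $f$ and $g$ into diagonal, $Y_1$-block and $Y_2$-block parts and check the identity from $fg=0$ block by block. This mirrors the non-proper commuting map construction of \cite{jia20}; if the plain $F$ does not work, I would instead take $F(f)=e_z$-adjusted corner projections or a commuting map from that paper, with $F_1=F$, $F_3=-F$ and $F_2=F_4=0$. I would then show that the resulting $F$ cannot have the form \eqref{eq:four-map-standard}. Evaluating on $e_x$ and $e_{xy}$ should force $F(e_{xy})$ to have the shape $c\,e_{xy}+\ad$-terms$+$central terms uniformly across all edges, which contradicts $F$ being the identity on $Y_1$-edges and zero on $Y_2$-edges.

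I expect this counterexample, both its construction and the verification of the local identity, to be the main obstacle.
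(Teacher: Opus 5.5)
Your (i)$\Rightarrow$(ii) argument is correct and is the paper's own argument. The paper applies Propositions~\ref{thm:antisymmetric-standard-form} and \ref{thm:symmetric-standard-form} to $F_1\pm F_3$, $F_2\pm F_4$ and halves afterwards, and your sign bookkeeping checks out.

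The gap is in (ii)$\Rightarrow$(i): you never produce a valid non-standard solution.
\begin{itemize}
\item Your explicit $F$ fails the local identity in general. Take $X=\{1,2,3,4\}$ with $1<2<3$ and $1<4$, so $z=1$ is a cut vertex and $Y_1=\{2,3\}$. Then $p=e_1+e_{12}$ and $q=e_{23}-e_{13}$ satisfy $pq=qp=0$, but $F(p)q+pF(q)=e_{12}(e_{23}-e_{13})+pq=e_{13}\neq0$.
\item Where the identity does hold, e.g.\ for $1<2$, $1<3$, your $F$ is the transitive derivation $\Delta_\sigma$ with $\sigma(1,2)=1$, $\sigma(1,3)=0$, hence standard. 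So your non-standardness sketch is also off: standard maps need not act uniformly on edges, because transitive derivations scale different edges independently, subject only to transitivity.
\item More fundamentally, neither slot pattern you propose can ever give a counterexample. For $(F,F,0,0)$, apply Lemma~\ref{lem:idempotent-bilinear} to $\Phi(f,g)=F(f)g+fF(g)$. It gives $F(\one)=c\one$, and it shows that $\delta=F-c\,\mathrm{id}$ satisfies $\delta(f\circ g)=\delta(f)\circ g+f\circ\delta(g)$. The part of the proof of Lemma~\ref{lem:symmetric-master-structure} showing that $\delta$ is a derivation, run with $\rho=0$, never uses the cycle condition. Hence $F=c\,\mathrm{id}+\delta$ is standard.
\item For your fallback $(F,0,-F,0)$, the identity is \eqref{eq:antisymmetric-local} with $A=F$, $B=0$. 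Lemma~\ref{lem:antisymmetric-reduction}, which uses only the $2$-zpd property, then forces $L=0$, i.e.\ $F(f)=F(\one)f$, which is standard.
\end{itemize}

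The missing idea is the paper's choice $F_1=F_2=L$, $F_3=F_4=-L$. This turns the identity into $[L(f),g]+[f,L(g)]=0$, so only $[f,g]=0$ is needed, and $L$ must act nontrivially on the diagonal.

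Let $X_1,\ldots,X_m$ be the components of $\Gamma(X)\setminus\{z\}$ and $p_i=\sum_{x\in X_i}e_x$. Let $\mathcal J_i$ be spanned by all $e_{uv}$ with $u,v\in X_i\cup\{z\}$ and $(u,v)\neq(z,z)$, so the idempotents $e_x$ with $x\in X_i$ are included. Set $L=t_i\,\mathrm{id}$ on $\mathcal J_i$ and $L(e_z)=-\sum_it_ip_i$, with the $t_i$ not all equal.

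The cut vertex gives $\mathcal J_i\mathcal J_j=0$ for $i\neq j$: a chain $x<z<v$ with $x\in X_i$, $v\in X_j$ would produce the edge $\{x,v\}$ avoiding $z$. Also, $p_i+e_z$ is a two-sided unit for $\mathcal J_i$. Together these yield $[L(f),g]+[f,L(g)]=2\sum_it_ih_i$, where $[f,g]=\sum_ih_i$ with $h_i\in\mathcal J_i$. So the identity holds whenever $[f,g]=0$.

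Non-standardness is then read off the diagonal rather than the edges. $L(\one)=0$ forces $C=c\one$. Now compare diagonal entries in $t_ie_x=ce_x+D(e_x)+\Omega(e_x)$ for $x\in X_i$, using $D(e_x)(y,y)=0$ from Lemma~\ref{lem:derivation-diagonal}. This gives $t_i=c$ for every $i$, a contradiction.
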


\begin{proof}
Assume that (i) holds. Let $F_1,F_2,F_3,F_4$ be $R$-linear maps satisfying
\eqref{eq:four-map-local}. Since the condition $fg=gf=0$ is
symmetric in $f$ and $g$, interchanging them gives
\begin{equation}\label{eq:four-map-swapped}
F_1(g)f+gF_2(f)+F_3(f)g+fF_4(g)=0.
\end{equation}
Subtracting \eqref{eq:four-map-swapped} from \eqref{eq:four-map-local} gives
$$0=(F_1-F_3)(f)g-(F_1-F_3)(g)f+f(F_2-F_4)(g)-g(F_2-F_4)(f).$$
Applying Proposition~\ref{thm:antisymmetric-standard-form} to
$A=F_1-F_3$ and $B=F_2-F_4$, there are $P_1,P_2\in\I$, a derivation $\Delta_1$, and a
central-valued map $\Lambda_1$ such that, for all $f\in\I$,
\begin{equation}\label{eq:difference-forms}
\begin{aligned}
(F_1-F_3)(f)&=P_1f+\Delta_1(f)+\Lambda_1(f),\\
(F_2-F_4)(f)&=fP_2+\Delta_1(f)+\Lambda_1(f).
\end{aligned}
\end{equation}
Adding \eqref{eq:four-map-swapped} to \eqref{eq:four-map-local} gives
$$0=(F_1+F_3)(f)g+f(F_2+F_4)(g)+(F_1+F_3)(g)f+g(F_2+F_4)(f).$$
Applying Proposition~\ref{thm:symmetric-standard-form} to
$G=F_1+F_3$ and $H=F_2+F_4$, there are $P_3,P_4\in\I$, a derivation $\Delta_2$, and a
central-valued map $\Lambda_2$ such that
\begin{equation}\label{eq:sum-forms}
\begin{aligned}
(F_1+F_3)(f)&=P_3f+\Delta_2(f)+\Lambda_2(f),\\
(F_2+F_4)(f)&=fP_4+\Delta_2(f)-\Lambda_2(f).
\end{aligned}
\end{equation}
Set
\[
\begin{aligned}
C_1&=\tfrac12(P_1+P_3),&
C_2&=\tfrac12(P_2+P_4),&
C_3&=\tfrac12(P_3-P_1),&
C_4&=\tfrac12(P_4-P_2),\\
D_1&=\tfrac12(\Delta_1+\Delta_2),&
D_2&=\tfrac12(\Delta_2-\Delta_1),\\
\Omega_1&=\tfrac12(\Lambda_1+\Lambda_2),&
\Omega_2&=\tfrac12(\Lambda_1-\Lambda_2).
\end{aligned}
\]
Then it is clear that $D_1,D_2$ are derivations and $\Omega_1,\Omega_2$ are central-valued maps.
Combining \eqref{eq:difference-forms} and \eqref{eq:sum-forms}, we have
\[
\begin{aligned}
F_1(f)
&=\frac12\bigl((F_1-F_3)(f)+(F_1+F_3)(f)\bigr)=C_1f+D_1(f)+\Omega_1(f),\\
F_2(f)
&=\frac12\bigl((F_2-F_4)(f)+(F_2+F_4)(f)\bigr)=fC_2+D_1(f)+\Omega_2(f),\\
F_3(f)
&=\frac12\bigl((F_1+F_3)(f)-(F_1-F_3)(f)\bigr)=C_3f+D_2(f)-\Omega_2(f),\\
F_4(f)
&=\frac12\bigl((F_2+F_4)(f)-(F_2-F_4)(f)\bigr)=fC_4+D_2(f)-\Omega_1(f),
\end{aligned}
\]
for all $f\in\I$. This proves \eqref{eq:four-map-standard} and we get (ii).

Now assume that (ii) holds. We assume opposite that there are two edges in $E(\Gamma(X))$ that are not contained in a cycle.
Then $\Gamma(X)$ has a cut vertex $z$ by Lemma~\ref{lem:graph-equivalences}.
Let $X_1,\ldots,X_m$ be the connected components of $X\setminus\{z\}$, where $m\geq2$.
Set $p_i=\sum_{x\in X_i}e_x$ and let
\begin{equation*}
\mathcal{J}_i
=R\text{-}\Span\{e_{uv}\mid u\leq v,\ u,v\in X_i\cup\{z\},\,
(u,v)\neq(z,z)\}.
\end{equation*}
We next establish some facts about these $R$-submodules $\mathcal{J}_i$ of $\I$.

\smallskip
\noindent\emph{Claim 1.}
If $i\neq j$, then
\begin{equation}\label{eq:Ji-cross-zero}
\mathcal J_i\mathcal J_j=0=\mathcal J_j\mathcal J_i.
\end{equation}

We only prove $\mathcal J_i\mathcal J_j=0$. Suppose, to the contrary, that $\mathcal J_i\mathcal J_j\neq0$.
Then there exist $e_{xy}\in\mathcal J_i$ and $e_{uv}\in\mathcal J_j$ such that
$e_{xy}e_{uv}\neq 0$, which implies $y=u$. Notice that $y\in X_i\cup\{z\}$ and $u\in X_j\cup\{z\}$, but
$X_i$ and $X_j$ are distinct connected components of $X\setminus\{z\}$.
Therefore, $(X_i\cup\{z\})\cap(X_j\cup\{z\})=\{z\}$ and hence $y=u=z$. On the other hand,
by the definition $(x,y)\neq (z,z)$, but $y=z$, we can get that $x\in X_i$.
Similarly, there is $v\in X_j$. That means $x,z,v$ are three distinct vertices and $x<z<v$.
Then $x<v$ and hence the vertices $x$ and $v$ are
adjacent in the comparability graph $\Gamma(X)$. Moreover, the edge $\{x,v\}$ belongs to
$\Gamma(X)\setminus\{z\}$. This edge connects a vertex of $X_i$ to
a vertex of $X_j$, contradicting the fact that $X_i$ and $X_j$ are
distinct connected components of $\Gamma(X)\setminus\{z\}$.
Therefore $\mathcal J_i\mathcal J_j=0$.

\smallskip
\noindent\emph{Claim 2.}
For all $h\in\mathcal J_i$ with $1\leq i\leq m$,
\begin{equation}\label{eq:local-unit}
(p_i+e_z)h=h=h(p_i+e_z).
\end{equation}

In fact, since $X_i\cap \{z\}=\emptyset$, we have
\[
(p_i+e_z)e_{uv}=\left(\sum_{x\in X_i}e_x+e_z\right)e_{uv}=e_{uv},
\]
for any basis element $e_{uv}\in \mathcal J_i$. Therefore, we obtain $(p_i+e_z)h=h$, and
similarly $h=h(p_i+e_z)$, for all $h\in\mathcal J_i$ with $1\leq i\leq m$.

\smallskip
We next construct an $R$-linear map which is Lie derivable at zero.
Notice that every basis element $e_{xy}$ of $\I$, except $e_z$, belongs to exactly one $\mathcal{J}_i$.
Hence there is a direct sum decomposition of $R$-modules as follows
\begin{equation}\label{eq:I-cut-decomposition}
\I=Re_z\oplus\bigoplus_{i=1}^m\mathcal J_i.
\end{equation}
Let $t_1,\ldots,t_m\in R$ be scalars, which are not all equal. We define an $R$-linear map $L:\I\to\I$ by
\[
L(e_z)=-\sum_{i=1}^m t_ip_i
\]
and
\[
L(e_{uv})=t_i e_{uv}
\]
for all $e_{uv}\in \mathcal{J}_i$ and $1\leq i\leq m$. For arbitrary elements $f,g\in \I$,
we write them according to \eqref{eq:I-cut-decomposition} as
$f=\alpha e_z+\sum_{i=1}^m f_i$ and $g=\beta e_z+\sum_{i=1}^m g_i$, where
$f_i,g_i\in\mathcal J_i$ and $\alpha,\beta\in R$.
Then it follows from the definition of $L$ that
\begin{equation}\label{eq:L-cut-formula}
L(f)=\sum_{i=1}^m t_i(f_i-\alpha p_i)~\text{and}~
L(g)=\sum_{i=1}^m t_i(g_i-\beta p_i).
\end{equation}
By Claim~1, a direct computation shows
\begin{equation}\label{eq:commutator-cut-components}
[f,g]=\sum_{i=1}^m h_i,
\end{equation}
where
$h_i=[f_i,g_i]+\alpha[e_z,g_i]+\beta[f_i,e_z]\in\mathcal J_i$.
Using \eqref{eq:L-cut-formula} and Claim~1, we also have
\[
\begin{aligned}
{}[L(f),g]
&=\sum_{i=1}^m t_i
[f_i-\alpha p_i,\,\beta e_z+g_i]\\
&=\sum_{i=1}^m t_i
\bigl([f_i,g_i]+\beta[f_i,e_z]-\alpha[p_i,g_i]\bigr)
\end{aligned}
\]
and
\[
\begin{aligned}
{}[f,L(g)]
&=\sum_{i=1}^m t_i
[\alpha e_z+f_i,\,g_i-\beta p_i]\\
&=\sum_{i=1}^m t_i
\bigl([f_i,g_i]+\alpha[e_z,g_i]-\beta[f_i,p_i]\bigr).
\end{aligned}
\]
Notice that the Claim~2 implies $[p_i+e_z,h]=0$, for any $h\in\mathcal J_i$. Hence
$[p_i,g_i]=-[e_z,g_i]$ and $[f_i,p_i]=-[f_i,e_z]$. Therefore,
\begin{equation}\label{eq:L-cut-commuting-calculation}
[L(f),g]+[f,L(g)]=2\sum_{i=1}^m t_i h_i.
\end{equation}
If we assume $[f,g]=0$, then it follows from \eqref{eq:commutator-cut-components} and \eqref{eq:I-cut-decomposition}
that $h_i=0$ for each $1\leq i\leq m$. Hence $L$ is Lie derivable at zero by \eqref{eq:L-cut-commuting-calculation}, i.e.,
\begin{equation}\label{eq:counterexample-commuting}
[L(f),g]+[f,L(g)]=0,~\text{whenever}~[f,g]=0.
\end{equation}

Finally, we prove the statement (i).
Set $F_1=F_2=L$ and $F_3=F_4=-L$.
If $fg=gf=0$, then $[f,g]=0$, and hence
\[
\begin{aligned}
&F_1(f)g+fF_2(g)+F_3(g)f+gF_4(f)\\
&=L(f)g+fL(g)-L(g)f-gL(f)\\
&=[L(f),g]+[f,L(g)]=0
\end{aligned}
\]
by \eqref{eq:counterexample-commuting}. By our assumption, there exist $C\in\I$,
a derivation $D:\I\to \I$ and a central-valued map $\Omega: \I\to Z(\I)$ such that
\begin{equation}\label{eq:L-assumed-standard}
L(f)=Cf+D(f)+\Omega(f)
\end{equation}
for all $f\in\I$. Writing $\one=e_z+\sum_i p_i$, we have from the construction of $L$ that
$L(\one)=-\sum_i t_ip_i+\sum_i t_ip_i=0$. Taking $f=\one$ in \eqref{eq:L-assumed-standard},
we obtain $C=-\Omega(\one)\in Z(\I)$. Since the poset $X$ is connected, it follows from
Proposition~\ref{prop:center} that $C=c\one$ for some $c\in R$.
For any $1\leq i\leq m$, let $x\in X_i$. Taking $f=e_x$ in \eqref{eq:L-assumed-standard}, we have
\begin{equation}\label{eq:diagonal-contradiction}
t_i e_x=ce_x+D(e_x)+\omega_x\one,
\end{equation}
where $\omega_x\in R$ with $\omega_x\one=\Omega(e_x)$. Let $y\neq x$.
Multiplying $e_y$ in \eqref{eq:diagonal-contradiction} from both sides, we can get from Lemma~\ref{lem:derivation-diagonal}
that $\omega_x=0$. Furthermore, there is $t_i=c$, since $D(e_x)(x,x)=0$ also by Lemma~\ref{lem:derivation-diagonal}.
In other words, all the $t_i$'s are equal to each other, contradicting the
choice of the $t_i$ for $1\leq i\leq m$. Thus (i) holds.
\end{proof}

If $R$-linear maps $F_1,F_2,F_3,F_4:\I\to\I$ have the standard form \eqref{eq:four-map-standard},
a direct computation shows that they are solutions of the local FI \eqref{eq:four-map-local}.

\begin{remark}\label{rem:counterexample}
Notice that the condition $|X|>2$ in Theorem~\ref{thm:main-characterization} is crucial in our proof.
For example, in the proof of Claim 1, we need three distinct vertices $x,z,v$ such that $x<z<v$.
Moreover, when $|X|=2$, the local FI \eqref{eq:four-map-local} has a solution which is not of the form \eqref{eq:four-map-standard},
see \cite[Example 4.3]{ArgacGhahramani25}.
\end{remark}

\begin{corollary}\label{cor:upper-triangular-standard-form}
Let $R$ be a commutative ring with unity such that
$\frac{1}{2}\in R$. Let $n\geq3$ and $\mathcal T_n(R)$ be the
algebra of all $n\times n$ upper triangular matrices over $R$.
For any quadruple of $R$-linear maps
\[
F_1,F_2,F_3,F_4:\mathcal T_n(R)\longrightarrow\mathcal T_n(R),
\]
the identity
\[
F_1(A)B+AF_2(B)+F_3(B)A+BF_4(A)=0
\]
holds whenever $AB=BA=0$ if and only if there exist
$C_1,C_2,C_3,C_4\in\mathcal T_n(R)$, derivations
$D_1,D_2:\mathcal T_n(R)\to\mathcal T_n(R)$, and $R$-linear maps
$\Omega_1,\Omega_2: \mathcal T_n(R)\longrightarrow Z(\mathcal T_n(R))$
such that
\[
\begin{aligned}
F_1(A)&=C_1A+D_1(A)+\Omega_1(A),\\
F_2(A)&=AC_2+D_1(A)+\Omega_2(A),\\
F_3(A)&=C_3A+D_2(A)-\Omega_2(A),\\
F_4(A)&=AC_4+D_2(A)-\Omega_1(A),
\end{aligned}
\]
for all $A\in\mathcal T_n(R)$,
\end{corollary}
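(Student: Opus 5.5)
The plan is to realize $\mathcal T_n(R)$ as an incidence algebra and then read off both directions from Theorem~\ref{thm:main-characterization} and the remark after it.

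Let $X=\{1,2,\ldots,n\}$ with its usual total order. By definition, $I(X,R)$ consists of functions $f$ with $f(i,j)=0$ unless $i\le j$, multiplied by convolution. The map $f\mapsto\sum_{i\le j}f(i,j)E_{ij}$ is therefore an $R$-algebra isomorphism $I(X,R)\cong\mathcal T_n(R)$, sending $e_{ij}$ to the matrix unit $E_{ij}$. Under this isomorphism:
\begin{enumerate}
\item[(1)] derivations of $I(X,R)$ correspond to derivations of $\mathcal T_n(R)$;
\item[(2)] the center $Z(I(X,R))$ corresponds to $Z(\mathcal T_n(R))$, so central-valued linear maps correspond as well;
\item[(3)] the condition $AB=BA=0$ is transported unchanged.
\end{enumerate}
Consequently, the local identity and the standard form \eqref{eq:four-map-standard} pass verbatim between the two algebras, with the $C_i$, $D_i$, $\Omega_i$ replaced by their images.

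It then remains to check the hypotheses of Theorem~\ref{thm:main-characterization} for this $X$. Since $X$ is a chain, every two distinct elements are comparable, so $\Gamma(X)$ is the complete graph $K_n$. It is connected and $|X|=n\ge 3>2$.

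For condition (i), I would invoke Lemma~\ref{lem:graph-equivalences} and verify (c) instead of building cycles by hand. Deleting any vertex $z$ from $K_n$ leaves $K_{n-1}$ with $n-1\ge 2$ vertices, which is connected. So (c) holds, hence (a) holds, which is exactly statement (i) of the theorem.

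For the "only if" direction, apply Theorem~\ref{thm:main-characterization}: (i) gives (ii), and transporting through the isomorphism yields the stated forms of $F_1,\dots,F_4$ on $\mathcal T_n(R)$.

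For the "if" direction, use the remark following the theorem: maps of the standard form satisfy the local identity, by a direct check. I would include that check briefly. Let $AB=BA=0$. The terms $C_1AB$, $ABC_2$, $C_3BA$ and $BAC_4$ vanish. Expanding the derivation terms gives
\[D_1(A)B+AD_1(B)=D_1(AB)=0,\qquad D_2(B)A+BD_2(A)=D_2(BA)=0.\]
Because the $\Omega$'s take central values, the remaining terms are
\[\Omega_1(A)B+A\Omega_2(B)-\Omega_2(B)A-B\Omega_1(A)=[\Omega_1(A),B]+[A,\Omega_2(B)],\]
and both commutators vanish by centrality.

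I expect no real obstacle here. The only point needing care is confirming that the isomorphism preserves the notions of derivation and center-valued map, and that is immediate because it is an algebra isomorphism.
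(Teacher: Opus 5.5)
Your proposal is correct and follows the route the paper intends. The paper gives no separate proof: it identifies $\mathcal T_n(R)$ with $I(X,R)$ for the $n$-element chain, uses that $\Gamma(X)=K_n$ with $n\ge 3$ satisfies condition (i) of Theorem~\ref{thm:main-characterization}, and gets the converse from the direct computation noted right after that theorem. You have simply written out these steps, including the verification that the standard forms satisfy the identity.
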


\smallskip
\noindent\textbf{Conflict of interest.}
On behalf of all authors, the corresponding author states that
there is no conflict of interest.

\end{document}